\newtheorem{thm}{Theorem}[section]
\newtheorem{lemma}{Lemma}[section]
\newtheorem{prop}{Proposition}[section]
\newtheorem{cor}{Corollary}[section]
\theoremstyle{definition}
\numberwithin{equation}{section}
\newcommand{\rr}{\mathbb R}
\newcommand{\al}{\alpha}
\newcommand{\de}{\delta}
\newcommand{\eps}{\epsilon}
\newcommand{\la}{\lambda}
\newcommand{\ee}{\varepsilon}
\newcommand{\bra}{\langle}
\newcommand{\ket}{\rangle}
\newcommand{\Om}{\Omega}
\newcommand{\calI}{\mathcal I}
\newcommand{\pl}{\partial}
\newcommand{\calP}{\mathcal P}
\newcommand{\calL}{\mathcal L}
\newcommand{\calJ}{\mathcal J_\lambda}
\newcommand{\calF}{\mathcal F}
\newcommand{\IO}{\Omega\times I}
\newcommand{\rhoa}{\rho_\alpha}
\newcommand{\dtx}{d\tilde x}
\newcommand{\tO}{\widetilde\Omega}
\newcommand{\tx}{\tilde x}
\newcommand{\tG}{\widetilde{G}}
\newcommand{\tA}{\widetilde\Gamma_\lambda}
\newcommand{\tilA}{\widetilde A}
\newcommand{\tilE}{\widetilde E}
\newcommand{\km}{k^M}
\newcommand{\Ue}{U_\eps}
\newcommand{\psie}{\psi_\eps}
\newcommand{\rhoe}{\rho_\eps}
\def\bge{\begin{eqnarray}}
\def\bgee{\begin{eqnarray*}}
\def\ege{\end{eqnarray}}
\def\egee{\end{eqnarray*}}
\DeclareMathOperator{\dive}{div}
\DeclareMathOperator{\supp}{supp}
\def\sideremark#1{\ifvmode\leavevmode\fi\vadjust{\vbox to0pt{\vss
 \hbox to 0pt{\hskip\hsize\hskip1em
 \vbox{\hsize3cm\tiny\raggedright\pretolerance10000
  \noindent #1\hfill}\hss}\vbox to8pt{\vfil}\vss}}}%
\begin{document}
\title[A multi-species chemotaxis system]{A multi-species chemotaxis system:
Lyapunov functionals, duality, critical mass}
\author{N.I.~Kavallaris}
\address[N.I.~Kavallaris]{Department of Mathematics,
Faculty of Science and Engineering, University of Chester,
Thornton Science Park, Chester CH2 4NU, U.K.}
\email{n.kavallaris@chester.ac.uk}
\author{T.~Ricciardi${}^\ast$}\thanks{${}^\ast$Corresponding author}
\address[T.~Ricciardi] {Dipartimento di Matematica e Applicazioni,
Universit\`{a} di Napoli Federico II, Via Cintia, Monte S.~Angelo, 80126 Napoli, Italy}
\email{tonricci@unina.it}
\author{G.~Zecca}
\address[G.~Zecca] {Dipartimento di Matematica e Applicazioni,
Universit\`{a} di Napoli Federico II, Via Cintia, Monte S.~Angelo, 80126 Napoli, Italy}
\email{g.zecca@unina.it}
\begin{abstract}
We introduce a multi-species chemotaxis type system admitting an
arbitrarily large number of population species,
all of which are attracted vs.\ repelled
by a single chemical substance. The production vs.\ destruction rates of the
chemotactic substance by the species is described by a probability measure.
For such a model we investigate the variational structures,
in particular we prove the existence of Lyapunov functionals,
we establish duality properties as well as a logarithmic Hardy-Littlewood-Sobolev type inequality
for the associated free energy.
The latter inequality provides the optimal critical value for the conserved total population mass.
\end{abstract}
\keywords{Multi-species chemotaxis models, Lyapunov functionals, duality, logarithmic Hardy-Littlewood-Sobolev inequality,
Moser-Trudinger inequality}
\subjclass[2000]{35K51, 35J20, 26D15, 92C17}
\maketitle
\section{Introduction and motivation}
\label{sec:intro}
Since the pioneering chemotaxis model of Keller and Segel \cite{KellerSegel},
see also Patlak \cite{Patlak1953},
several models have been introduced in order to describe the chemotactic movement of motile species,
such as the slime mold \textit{Dictyostelium Discoideum}.
In particular, much attention has been devoted in recent years to derive multi-species
chemotactic models, see  \cite{ESV2009, Ho2011, Wo2002, Wo2014} and the references therein.
\par
Our aim in this note is to introduce and to analyze, particularly from the variational point of view,
a new multi-species parabolic-parabolic chemotaxis system involving an
arbitrarily large number of population species $\rhoa$, depending on the (possibly continuous) index
$\al\in[-1,1]$, and a single chemical $v$.
Such  a \lq\lq continuous index" will turn out to be useful in order to efficiently
formulate, in terms of a probability distribution $\calP(d\al)$
defined on the index range $[-1,1]$, the variational structures of the system,
as well as to describe relevant quantities such as the conserved total population mass
and the overall chemical production rate.
We assume that $\rhoa$ and $v$ are defined on a two-dimensional domain,
which is a natural setting for species raised in a cell-culture dish.
In our model, some of the population species are attracted by the substance $v$, while others are repelled by
it, with different (normalized) intensities given by the value $\al\in[-1,1]$,
where positive values of $\al$ correspond to attraction whilst negative values correspond to repulsion.
In turn, the substance is self-produced by those species it attracts, and destroyed by
those species it repels. In particular, this model fits the \lq\lq absence of conflicts"
definition introduced in \cite{Wo2002}. Birth and death rates are neglected.
\par
We are particularly interested in the limit case where the dynamics of the population species is significantly faster than the
dynamics of the chemical. In this case, our system may be written as an evolution problem
for the chemical substance $v$ only. We further assume that the total mass of \textit{all} the population species,
is conserved in time. This assumption is natural when the different species are produced by a cell differentiation process as occurs, e.g.,
in the early aggregation stages of the Dictyostelium during mound formation
\cite{ESV2009, Weijer}.
\par
More precisely, we consider the following system:
\begin{equation}
\label{eq:detsys}
\left\{
\begin{aligned}
&\de_\al\frac{\pl\rhoa}{\pl t}=\Delta\rhoa-\al\dive(\rhoa\nabla v),&&\hbox{in\ }\Omega\times(0,T),\ \al\in[-1,1]\\
&\ee\frac{\pl v}{\pl t}=\Delta v+\int_{[-1,1]}\al\rhoa\,\calP(d\al),&&\hbox{in\ }\Omega\times(0,T)\\
&\nu\cdot(\nabla\rhoa-\al\rhoa\nabla v)=0,\ v=0, &&\hbox{on\ }\pl\Omega\times(0,T)\\
&\rhoa(x,0)=\rhoa^0(x)\ge0,\quad v(x,0)=v^0(x),&&\hbox{in\ }\Omega,\\
\end{aligned}
\right.
\end{equation}
where $\Om\subset\rr^2$ is a smooth bounded domain, $\nu$ denotes the outer unit normal vector on $\pl\Omega$,
$T>0$ stands for the maximum existence time for \eqref{eq:detsys},
$\al\in[-1,1]$, $\calP\in\mathcal M([-1,1])$ is a probability measure,
$v^0\in H_0^1(\Om)$ and the constants $\ee,\de_\al$ satisfy $\ee>0$, $\de_\al\ge\de_0$
for some $\de_0>0$.
We observe that if $\supp\calP\subset[0,1]$, namely if $\calP$ is positively supported
(see \eqref{def:suppP} below for the precise definition of $\supp\calP$), then $v^0\ge0$ implies $v\ge0$ by the maximum principle.
On the other hand, if $\supp\calP\cap[-1,0)\neq\emptyset$, the function $v$ is not necessarily non-negative.
In this case, $v$ is interpreted as \lq\lq chemical potential", see \cite{Ho2011}.
\par
The evolution equation for $\rhoa$, together with the no-flux boundary condition in system~\eqref{eq:detsys},
implies the conservation in time of the population mass,
for each population $\rhoa$ separately:
\begin{equation}
\label{eq:massintro}
\int_\Om\rhoa(x,t)\,dx=\int_\Om\rhoa^0(x)\,dx \qquad\mbox{for all }\al\in[-1,1].
\end{equation}
Moreover, (weak) solutions to system~\eqref{eq:detsys} satisfy $\rhoa\ge0$ almost everywhere in $\Om\times(0,T)$,
see, e.g., \cite{Biler1992}, Proposition 1, and the references therein.
\par
We observe that for $\calP=\de_1(d\al)$, system~\eqref{eq:detsys} reduces to the classical Keller-Segel system
for a single population, denoted by $\psi$:
\begin{equation}
\label{eq:KSbasic}
\left\{
\begin{aligned}
&\de\frac{\pl\psi}{\pl t}=\Delta\psi-\dive(\psi\nabla v),&&\hbox{in\ }\Omega\times(0,T)\\
&\ee\frac{\pl v}{\pl t}=\Delta v+\psi,&&\hbox{in\ }\Omega\times(0,T)\\
&\nu\cdot(\nabla\psi-\psi\nabla v)=0,\quad v=0, &&\hbox{on\ }\pl\Omega\times(0,T)\\
&\psi(x,0)=\psi^0(x),\quad v(x,0)=v^0(x),\quad \psi^0,v^0\ge0,&&\hbox{in\ }\Omega.
\end{aligned}
\right.
\end{equation}
For the sake of future reference, we also explicitly note the two-species case
$\calP(d\al)=\tau\de_{\al_1}(d\al)+(1-\tau)\de_{\al_2}$,
$0<\tau<1$, $\al_1,\al_2\in[-1,1]$.
In this case system~\eqref{eq:detsys} takes the form:
\begin{equation*}
\left\{
\begin{aligned}
&\de_1\frac{\pl\rho_1}{\pl t}=\Delta\rho_1-\dive(\al_1\rho_1\nabla v),
&&\hbox{in\ }\Omega\times(0,T),\\
&\de_2\frac{\pl\rho_2}{\pl t}=\Delta\rho_2-\dive(\al_2\rho_2\nabla v),
&&\hbox{in\ }\Omega\times(0,T),\\
&\ee\frac{\pl v}{\pl t}=\Delta v+\tau\al_1\rho_1+(1-\tau)\al_2\rho_2,
&&\hbox{in\ }\Omega\times(0,T)\\
&\nu\cdot(\nabla\rho_1-\al_1\rho_1\nabla v)=0
=\nu\cdot(\nabla\rho_2-\al_2\rho_2\nabla v),
&&\hbox{on\ }\pl\Omega\times(0,T)\\
&v=0,
&&\hbox{on\ }\pl\Omega\times(0,T)\\
&\rho_1(x,0)=\rho_1^0(x)\ge0,\quad \rho_2(x,0)=\rho_2^0(x)\ge0
&&\hbox{in\ }\Omega\\
&v(x,0)=v^0(x),
&&\hbox{in\ }\Omega.
\end{aligned}
\right.
\end{equation*}
\subsection*{}
System~\eqref{eq:detsys} admits the following relevant limit cases.
\subsubsection*{Slow population dynamics limit: $\de_\al>0$, $\ee=0$}
In this case,
system \eqref{eq:detsys} reduces to the following parabolic-elliptic system:
\begin{equation}
\label{eq:rhosysintro}
\left\{
\begin{aligned}
&\de_\al\frac{\pl\rhoa}{\pl t}=\Delta\rhoa-\al\dive(\rhoa\nabla v),&&\hbox{in\ }\Omega\times(0,T),\ \al\in[-1,1]\\
&-\Delta v=\int_{[-1,1]}\al\rhoa\,\calP(d\al),&&\hbox{in\ }\Omega\times(0,T)\\
&\nu\cdot(\nabla\rhoa-\al\rhoa\nabla v)=0,\quad v=0 &&\hbox{on\ }\pl\Omega\times(0,T)\\
&\rhoa(x,0)=\rhoa^0(x)\ge0,&&\hbox{in\ }\Omega.
\end{aligned}
\right.
\end{equation}
Systems of the form \eqref{eq:rhosysintro} also appear in statistical mechanics
(where they are sometimes called Smoluchowski-Poisson systems)
as well as in the theory of semiconductors, see \cite{Biler1992, Chavanis, Gajewski1985} and the references therein.
In the context of chemotaxis, concentration phenomena for \eqref{eq:rhosysintro} were obtained in \cite{JaegerLuckhaus}.
We note that system~\eqref{eq:rhosysintro} decouples, in the sense that it may be written as an integro-differential system
for the populations $\rhoa$, $\al\in[-1,1]$:
\begin{equation}
\label{eq:rhosys2}
\de_\al\frac{\pl\rhoa}{\pl t}=\Delta\rho-\dive\left(\al\rhoa\nabla\iint_{\Om\times[-1,1]}G(x,y)\beta\rho_\beta(y)\,dy\calP(d\beta)\right),
\qquad\al\in[-1,1],
\end{equation}
where $G$ denotes the Green's function for $-\Delta$, see \eqref{def:G} below for the precise definition.
\subsubsection*{Fast population dynamics limit: $\de_\al=0$ for all $\al\in[-1,1]$,
$\ee=1$}
As already mentioned, we are particularly interested in this case.
Under this limit
we obtain the following elliptic-parabolic system:
\begin{equation}
\label{eq:ellpar}
\left\{
\begin{aligned}
&\Delta\rhoa-\al\dive(\rhoa\nabla v)=0,&&\hbox{in\ }\Omega\times(0,T),\ \al\in[-1,1]\\
&\frac{\pl v}{\pl t}=\Delta v+\int_{[-1,1]}\al\rhoa\,\calP(d\al),&&\hbox{in\ }\Omega\times(0,T)\\
&\nu\cdot(\nabla\rhoa-\al\rhoa\nabla v)=0,\ v=0,&&\hbox{on\ }\pl\Omega\times(0,T)\\
&v(x,0)=v^0(x),&&\hbox{in\ }\Omega.
\end{aligned}
\right.
\end{equation}
In this case, it is not difficult to check
(see the proof of Theorem~\ref{thm:var}-(iii) in Section~\ref{sec:variational} below) that
\[
\rhoa(x,t)=C_\al(t)e^{\al v(x,t)}
\]
for some $C_\al(t)>0$ independent of $x\in\Om$.
Therefore,
system~\eqref{eq:ellpar}
decouples into the following semilinear parabolic non-local equation for the chemical substance~$v$:
\begin{equation}
\label{eq:vintro}
\frac{\pl v}{\pl t}=\Delta v+\int_{[-1,1]}\al C_\al(t) e^{\al v}\,\calP(d\al).
\end{equation}
The limit system \eqref{eq:ellpar} no longer implies the
total mass conservation \eqref{eq:massintro}. Therefore, 
we cannot a priori exclude the dependence of $C_\al$ on the time $t$ and on the index $\al$.
On the other hand, the explicit value of $C_\al(t)$ is irrelevant to the dynamics of $\rhoa$,
which only involves $\nabla v$ by the first equation of \eqref{eq:ellpar}.
Therefore, we \textit{assume} a suitable form of mass conservation.
We focus our attention on the following \textit{average} mass conservation property
with respect to $\calP$:
\begin{equation}
\label{eq:averagemassintro}
\iint_{\Om\times[-1,1]}\rhoa(x,t)\,dx\calP(d\al)=\la,
\qquad\mbox{for all }t\in(0,T).
\end{equation}
As already mentioned, such a \lq\lq relaxed" mass conservation property is natural in the situation
where the single species $\rhoa$ are produced by a cell differentiation process.
From \eqref{eq:vintro}--\eqref{eq:averagemassintro} we finally obtain the following non-local evolution problem
for $v$:
\begin{equation}
\label{eq:vintrofinal}
\left\{
\begin{aligned}
&\frac{\pl v}{\pl t}=\Delta v+\la\int_{[-1,1]}\frac{\al e^{\al v}}{\iint_{\Om\times[-1,1]}e^{\beta v(y,t)}\,dy\calP(d\beta)}\,\calP(d\al),
&&\mbox{in\ }\Om\times(0,T)\\
&v=0,
&&\mbox{on\ }\partial\Om\times(0,T)\\
&v(x,0)=v^0(x),
&&\mbox{in\ }\Omega.
\end{aligned}
\right.
\end{equation}
Interestingly, the exponential type nonlinearity in \eqref{eq:vintrofinal}
is exactly the nonlinearity contained in the mean field equation derived by Neri \cite{Neri}
in the context of the statistical mechanics description of 2D turbulence,
extending Onsager's approach \cite{Onsager}, see also \cite{CLMP}.
The steady states for \eqref{eq:vintrofinal} received a considerable attention in recent years, see, e.g.,
\cite{RiZe2016, DeMarchisRicciardi, RicciardiTakahashi, PistoiaRicciardi, GuiJevnikarMoradifam} and the references therein.
Thus, by analyzing \eqref{eq:vintrofinal}, we provide further insight for the mean field equation derived in \cite{Neri}.
Results for the evolution problems of the \lq\lq mean field" form \eqref{eq:vintrofinal},
in the \lq\lq standard" case  $\calP(d\al)=\de_1(d\al)$
were obtained in \cite{KavallarisSuzuki, Wo1997, BebernesLacey, BebernesTalaga}.
Some related non-local evolution problems have also been analyzed
in connection with the modelling of shear banding and Ohmic heating, see \cite{Lacey1995, Lacey1995II, KLT2004}
and the references therein.
\par
From the mathematical point of view, we are interested in the variational structures associated to the
multi-species chemotaxis species \eqref{eq:detsys}, which are a key tool in establishing the global existence
of solutions \cite{BebernesLacey, GajewskiZacharias1998, QS07, Ho2011}.
In particular, we rigorously establish the existence of a Lyapunov functional and
we establish a duality principle for $\rhoa$ and $v$.
Some of these results are stated and justified heuristically in \cite{Suzuki2008book}.
The rigorous proof however requires some care, since the
natural functional space for $(\rhoa)_{\al\in[-1,1]}$ is the logarithmic space $L^1([-1,1],L\log L(\Om);\calP)$,
which is known to be non-reflexive, see, e.g., \cite{RaoRen, RicciardiSuzuki}.
To this end, we adapt some ideas from \cite{Biler1992, RicciardiSuzuki}.
Finally, in the fast population dynamics limit we determine the critical mass for the global existence of solutions
vs.\ chemotactic collapse \cite{DolbeaultPerthame2004, JaegerLuckhaus},
in the form of an optimal logarithmic Hardy-Littlewood-Sobolev type inequality in the spirit of \cite{Beckner, ShafrirWolansky}.
In view of the duality principle, our inequality is equivalent to the sharp Moser-Trudinger type inequality, \cite{Moser, Trudinger}, obtained in
\cite{RiZe2012} and thus provides a new proof for it.
\par
This article is organized as follows.
In Section~\ref{sec:results} we state our main results.
In Section~\ref{sec:variational} we obtain the Lyapunov functionals for \eqref{eq:detsys}--\eqref{eq:rhosysintro}--\eqref{eq:ellpar}.
Section~\ref{sec:duality} is devoted to the establishment of the duality principle, whilst
in Section~\ref{sec:Beckner} we prove the logarithmic HLS inequality and thus we obtain the critical mass for global existence.
Section~\ref{sec:appA} contains some technical estimates and
in Section~\ref{sec:appB} we provide some concluding remarks on the steady states of \eqref{eq:detsys}.
In particular, we observe that
the two stationary mean field problems of \cite{Neri} and \cite{SawadaSuzuki},
which have been extensively analyzed in recent years, see \cite{DeMarchisRicciardi, GuiJevnikarMoradifam, JevnikarYang, ORS, RiZe2012, RiZe2016,
Suzuki2008book}
and the references therein,
may both be obtained as steady states of
\eqref{eq:detsys} in the fast population dynamics limit, by assuming different conserved population mass constraints.
Hence, we provide a unified point of view for such stationary problems.
\subsection*{Notation}
In what follows, all integrals are taken in the sense of Lebesgue.
When the integration variable is clear from the context, we may omit it.
\section{Statement of the main results}
\label{sec:results}
In order to state our main results, we define the following functionals
\begin{equation}
\label{def:functionals}
\begin{aligned}
\calL(\oplus\rhoa,v):=&\iint_{\Om\times[-1,1]}\rhoa(\log\rhoa-1)\,dx\calP(d\al)+\frac{1}{2}\int_\Om|\nabla v|^2\,dx\\
&\qquad\qquad\qquad\qquad\qquad\qquad\qquad
-\iint_{\Om\times[-1,1]}\al\rhoa v\,dx\calP(d\al),\\
\calF(\oplus\rhoa):=&\iint_{\Om\times[-1,1]}\rhoa(\log\rhoa-1)\,dx\calP(d\al)\\
\qquad&-\frac{1}{2}\iint_{[-1,1]^2}\al\beta\,\calP(d\al)\calP(d\beta)\iint_{\Om^2}G(x,y)\rhoa(x)\rho_\beta(y)\,dxdy,\\
\calJ(v):=&\frac{1}{2}\int_\Om|\nabla v|^2\,dx-\la\log\left(\iint_{\Om\times[-1,1]}e^{\al v}\,dx\calP(d\al)\right)+\la(\log\la-1),
\end{aligned}
\end{equation}
defined for $\oplus\rhoa\in L^1([-1,1],L\log L(\Om);\calP)$, $\rhoa\ge0$ for all $\al\in[-1,1]$ and for $v\in H_0^1(\Om),$
where, following \cite{Suzuki2008book}, we denote $\oplus\rhoa:=\oplus_{\al\in[-1,1]}\rhoa=(\rhoa)_{\al\in[-1,1]}$.
\par
We recall that the space $L\log L(\Om)$ is defined as
\[
L\log L(\Om)=\left\{\psi\in L^1(\Om):\ \int_\Om|\psi\log|\psi||<+\infty\right\},
\]
and that it may be structured as an Orlicz space with Young function $\Phi(s)=(s+1)\log(s+1)-s$,
see, e.g., \cite{GajewskiZacharias1998, Ho2011, RaoRen}; however, we shall not need this point of view.
\par
For all $\la>0$ we define the following set of admissible functions
\[
\widetilde\Gamma_\la:=\left\{\oplus\rhoa\in L^1([-1,1],L\log L(\Om);\calP):\
\begin{aligned}
&\qquad\rhoa\ge0\ \forall\al\in[-1,1],\\
&\iint_{\Om\times[-1,1]}\rhoa\,dx\calP(d\al)=\la
\end{aligned}
\right\}.
\]
With this notation, our main results may be summarized as follows.
\begin{thm}[Variational structures]
\label{thm:var}
The following properties hold true.
\begin{enumerate}
  \item[(i)]
The functional~$\calL$ is a Lyapunov functional for \eqref{eq:detsys},
in the sense that the function
\[
g_0(t):=\calL(\oplus\rhoa(x,t),v(x,t))
\]
decreases along solutions $(\oplus\rhoa(x,t),v(x,t))$ to  \eqref{eq:detsys}.
Moreover, $g_0$ decreases strictly unless $\rhoa(x,t)=C_\al(t)e^{\al v(x,t)}$
for some $C_\al(t)>0$ independent of $x\in\Om$.
\item[(ii)]
The functional~$\calF$ is a Lyapunov functional for the Smoluchowski-Poisson system \eqref{eq:rhosys2},
in the sense that the function
\[
h_0(t):=\calF(\oplus\rhoa(x,t))
\]
decreases
along solutions $\oplus\rhoa(x,t)$ to \eqref{eq:rhosys2}.
Moreover, $h_0$ decreases strictly away from stationary solutions.
\item[(iii)]
The semilinear parabolic problem~\eqref{eq:vintrofinal} is the gradient flow for $\calJ$.
 \item[(iv)]
The following duality property holds true:
\[
\inf_{\widetilde\Gamma_\la\times H_0^1(\Om)}\calL=\inf_{\widetilde\Gamma_\la}\calF=\inf_{H_0^1(\Om)}\calJ.
\]
\end{enumerate}
\end{thm}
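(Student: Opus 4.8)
The plan is to establish the chain of identities by proving the two equalities separately, each by a double inequality, exploiting the fact that in each minimization the ``extra'' variables can be eliminated by an explicit minimization. First I would treat $\inf_{\widetilde\Gamma_\la\times H_0^1(\Om)}\calL=\inf_{\widetilde\Gamma_\la}\calF$. Fix $\oplus\rhoa\in\widetilde\Gamma_\la$ and minimize $v\mapsto\calL(\oplus\rhoa,v)$ over $H_0^1(\Om)$: the functional is strictly convex and coercive in $v$, so the unique minimizer $v=v_\rho$ solves $-\Delta v_\rho=\int_{[-1,1]}\al\rhoa\,\calP(d\al)$ with $v_\rho=0$ on $\pl\Om$, i.e.\ $v_\rho(x)=\iint_{\Om\times[-1,1]}G(x,y)\al\rhoa(y)\,dy\calP(d\al)$. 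Substituting $v_\rho$ back into $\calL$, integrating the Dirichlet energy by parts and using the representation of $v_\rho$, the quadratic terms collapse: $\frac12\int|\nabla v_\rho|^2-\iint\al\rhoa v_\rho = -\frac12\iint\al\rhoa v_\rho = -\frac12\iint_{[-1,1]^2}\al\beta\,\calP(d\al)\calP(d\beta)\iint_{\Om^2}G(x,y)\rhoa(x)\rho_\beta(y)$, which is exactly the interaction term in $\calF$. Hence $\min_{v}\calL(\oplus\rhoa,v)=\calF(\oplus\rhoa)$ for each fixed $\oplus\rhoa$, and taking $\inf$ over $\widetilde\Gamma_\la$ yields the first equality.

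Next I would treat $\inf_{\widetilde\Gamma_\la}\calF=\inf_{H_0^1(\Om)}\calJ$, which is a genuine Legendre-type duality. For the inequality $\inf\calF\le\inf\calJ$, given $v\in H_0^1(\Om)$ I would choose the ansatz $\rhoa(x)=\la\, e^{\al v(x)}/\iint_{\Om\times[-1,1]}e^{\beta v(y)}\,dy\calP(d\beta)$, which lies in $\widetilde\Gamma_\la$ by construction, substitute it into $\calF$, write $\log\rhoa=\log\la+\al v-\log(\iint e^{\beta v}\,dy\calP(d\beta))$, and use $-\Delta v$ paired against this $\rhoa$ to recognize the Dirichlet energy; after the algebra the entropy and interaction terms reorganize into $\frac12\int|\nabla v|^2-\la\log(\iint e^{\al v}\,dx\calP(d\al))+\la(\log\la-1)=\calJ(v)$, so $\calF(\rhoa)\le\calJ(v)$ (in fact with equality on this slice, up to the fact that this particular $v$ need not be the one generated by $\rhoa$). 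For the reverse inequality $\inf_{H_0^1}\calJ\le\inf_{\widetilde\Gamma_\la}\calF$, given $\oplus\rhoa\in\widetilde\Gamma_\la$ I set $v=v_\rho$ as above (so that $\calL(\oplus\rhoa,v_\rho)=\calF(\oplus\rhoa)$ from the first part), and must show $\calJ(v_\rho)\le\calF(\oplus\rhoa)=\calL(\oplus\rhoa,v_\rho)$. This reduces, after cancelling the common Dirichlet term, to the pointwise-in-$\al$ relative-entropy inequality $\iint\rhoa(\log\rhoa-1)-\iint\al\rhoa v_\rho\ \ge\ -\la\log(\iint e^{\al v_\rho}\,dx\calP(d\al))+\la(\log\la-1)$, which is precisely Jensen's inequality for the convex function $s\mapsto s\log s$ (equivalently the Gibbs variational principle / nonnegativity of Kullback--Leibler divergence) applied to the probability measure obtained by normalizing $\rhoa\,dx\calP(d\al)$ against the reference $e^{\al v_\rho}\,dx\calP(d\al)$, together with the mass constraint $\iint\rhoa\,dx\calP(d\al)=\la$.

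The main obstacle I anticipate is not the formal algebra but the functional-analytic bookkeeping forced by the non-reflexive space $L^1([-1,1],L\log L(\Om);\calP)$: I must make sure every integral written above is finite and that the substitutions are legitimate for all admissible $\oplus\rhoa$, not merely smooth ones. Concretely, one needs that $v_\rho\in H_0^1(\Om)\cap C(\overline\Om)$ with enough regularity for the integration by parts (elliptic estimates give $v_\rho\in W^{1,q}$ for all $q<2$ and in fact in $C^{0,\gamma}$ once $\int\rhoa\,\calP(d\al)\in L\log L$, via the Brezis--Merle / Trudinger--Moser machinery), that $\iint\rhoa v_\rho\,dx\calP(d\al)$ is absolutely convergent, and that the exponential integral $\iint e^{\al v}\,dx\calP(d\al)$ is finite for $v\in H_0^1$ — the last being exactly where the Moser--Trudinger inequality enters. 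I would handle these points by first proving the identities on a dense class (bounded $\rhoa$ bounded away from $0$, smooth $v$) where everything is classical, and then passing to the limit using the lower semicontinuity of the entropy functional on $L\log L$ and the continuity of the remaining terms; this is the step that ``requires some care,'' as the introduction warns, and is where I would borrow the approximation arguments of \cite{Biler1992, RicciardiSuzuki}. One should also verify that the infima are attained (or at least that the inequalities are tight), but for the stated equality of infima the two-sided estimates above suffice without invoking existence of minimizers.
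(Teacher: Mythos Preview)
Your plan for the first equality $\inf_{\widetilde\Gamma_\la\times H_0^1}\calL=\inf_{\widetilde\Gamma_\la}\calF$ is exactly the paper's: minimize $\calL(\rho,\cdot)$ in $v$ for fixed $\rho$, find $v_\rho$ with $-\Delta v_\rho=\int_I\al\rho\,\calP(d\al)$, and compute $\calL(\rho,v_\rho)=\calF(\rho)$.

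For the second equality your route diverges from the paper's, and there is one genuine slip. In your forward step ($\inf\calF\le\inf\calJ$) you claim that substituting the Gibbs ansatz $\rho_v=\la e^{\al v}/\iint e^{\beta v}$ into $\calF$ and ``pairing $-\Delta v$ against $\rho_v$'' yields $\calJ(v)$. This is false: the interaction term in $\calF$ involves $v_{\rho_v}=G\ast\int_I\al\rho_v$, not the given $v$, and in general $v\neq v_{\rho_v}$. A direct computation gives
\[
\calF(\rho_v)=\calJ(v)-\tfrac12\int_\Om|\nabla(v-v_{\rho_v})|^2,
\]
so you do get $\calF(\rho_v)\le\calJ(v)$, but via this defect term, not by the identification you describe. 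The cleanest fix is to route through $\calL$: from the first part $\calF(\rho_v)\le\calL(\rho_v,v)$, and a two-line entropy computation (which you essentially have) gives $\calL(\rho_v,v)=\calJ(v)$.

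Your reverse step, by contrast, is correct and is genuinely different from the paper. The paper proves $\min_{\widetilde\Gamma_\la}\calL(\cdot,v)=\calJ(v)$ by first \emph{constructing the minimizer} in the non-reflexive space $L\log L$: two truncation lemmas (one forcing an $L^\infty$ bound on minimizing sequences, one forcing strict positivity of the minimizer) yield existence of $\rho_v$ and its Euler--Lagrange identity $\log\rho_v-\al v=\mathrm{const}$, after which the value $\calJ(v)$ drops out. You instead bypass existence entirely: the inequality $\calJ(v)\le\calL(\rho,v)$ for every $\rho\in\widetilde\Gamma_\la$ is exactly the nonnegativity of the Kullback--Leibler divergence of $\rho/\la$ against $e^{\al v}/\iint e^{\beta v}$, and specializing to $v=v_\rho$ gives $\calJ(v_\rho)\le\calF(\rho)$. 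This is shorter and avoids the truncation machinery altogether; the paper's approach, on the other hand, yields the extra information that the infimum in $\rho$ is \emph{attained} and identifies the minimizer explicitly. Your integrability checks (that $e^{\al v}\in L^1$ by Moser--Trudinger, and $v_\rho\in H_0^1$ since $L\log L\hookrightarrow H^{-1}$ in two dimensions) are the right ones and suffice without any further density argument.
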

We note that Lyapunov functionals are a key tool in establishing the global existence of solutions,
see \cite{DolbeaultPerthame2004, GajewskiZacharias1998}.
Although property~(iv) is derived heuristically in \cite{Suzuki2008book}, a rigorous proof is rather delicate due to
the non-reflexivity of the Orlicz space $L\log L(\Om)$. Here we overcome this difficulty by
some \textit{ad hoc} truncation arguments, in the spirit of \cite{RicciardiSuzuki}.
\par
Our next result is a sharp logarithmic HLS inequality for the functional~$\calF$
of the type derived in \cite{Beckner, ShafrirWolansky},
which provides the critical total population mass threshold for the global existence of solutions,
see \cite{DolbeaultPerthame2004, GajewskiZacharias1998, KavallarisSouplet}.
\begin{thm}[Sharp logarithmic HLS type inequality]
\label{thm:Beckner}
Suppose that $\supp\calP\cap\{-1,1\}\neq\emptyset$.
Then, the functional $\calF$ is bounded from below on $\widetilde\Gamma_\la$ if and only if $\la\le8\pi$.
\end{thm}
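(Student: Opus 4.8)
The plan is to reduce the lower boundedness of $\calF$ on $\widetilde\Gamma_\la$ to the classical logarithmic Hardy--Littlewood--Sobolev inequality on the two-dimensional domain $\Om$, via the duality principle of Theorem~\ref{thm:var}-(iv). By that duality, $\inf_{\widetilde\Gamma_\la}\calF=\inf_{H_0^1(\Om)}\calJ$, so it suffices to decide when $\calJ$ is bounded from below on $H_0^1(\Om)$. Recall
\[
\calJ(v)=\frac12\int_\Om|\nabla v|^2\,dx-\la\log\!\left(\iint_{\Om\times[-1,1]}e^{\al v}\,dx\,\calP(d\al)\right)+\la(\log\la-1).
\]
The exponential term is the obstruction, and the key structural observation is that the inner integral $\iint e^{\al v}\,dx\,\calP(d\al)$ is dominated, up to a constant, by $\int_\Om(e^{v}+e^{-v})\,dx$, because $\al\in[-1,1]$ and convexity of $t\mapsto e^{tv(x)}$ gives $e^{\al v}\le \tfrac{1+\al}{2}e^{v}+\tfrac{1-\al}{2}e^{-v}\le e^{v}+e^{-v}$ pointwise. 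The hypothesis $\supp\calP\cap\{-1,1\}\neq\emptyset$ is exactly what is needed for the reverse bound: if $1\in\supp\calP$ then, testing with functions $v$ that are large and positive where one wishes, $\iint e^{\al v}\,dx\,\calP(d\al)$ captures the full growth of $\int_\Om e^{v}\,dx$ on the relevant scale (and symmetrically if $-1\in\supp\calP$, using $-v$). This pins the critical constant to that of the single-species Moser--Trudinger / log-HLS problem, namely $8\pi$.

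The steps, in order. \textbf{Step 1 (upper bound, sufficiency for $\la\le8\pi$).} Using the pointwise convexity bound above, $\log\iint e^{\al v}\,dx\,\calP(d\al)\le \log\int_\Om(e^{v}+e^{-v})\,dx\le \log 2+\max\{\log\int_\Om e^{v},\log\int_\Om e^{-v}\}+C$; then invoke the Moser--Trudinger inequality on $H_0^1(\Om)$ (equivalently, the log-HLS inequality of \cite{Beckner, ShafrirWolansky}) in the sharp form with constant $8\pi$, which yields $\log\int_\Om e^{v}\,dx\le \frac{1}{16\pi}\int_\Om|\nabla v|^2\,dx+C(\Om)$ and the same for $-v$. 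Substituting gives $\calJ(v)\ge \bigl(\tfrac12-\tfrac{\la}{16\pi}\bigr)\int_\Om|\nabla v|^2\,dx-C$, which is bounded below precisely when $\la\le8\pi$. \textbf{Step 2 (necessity for $\la>8\pi$).} Assume $1\in\supp\calP$ (the case $-1\in\supp\calP$ is symmetric, replacing $v$ by $-v$). Fix a concentrating family $v_\eps\in H_0^1(\Om)$ of ``Moser bubbles'' centered at an interior point, normalized so that $\int_\Om|\nabla v_\eps|^2\to\infty$ while $\log\int_\Om e^{v_\eps}\,dx\ge\frac{1}{16\pi}\int_\Om|\nabla v_\eps|^2\,dx-C$, so that $\frac12\int|\nabla v_\eps|^2-8\pi\log\int e^{v_\eps}\to-\infty$ for any ``excess'' built into $v_\eps$; concretely one takes a one-parameter scaling so that $\frac12\int|\nabla v_\eps|^2-\la\log\int e^{v_\eps}\to-\infty$ when $\la>8\pi$. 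It remains to transfer this from $\int_\Om e^{v_\eps}$ to $\iint e^{\al v_\eps}\,dx\,\calP(d\al)$: since $1\in\supp\calP$, for each $\eta>0$ the set $\{\al:\al>1-\eta\}$ has positive $\calP$-measure $c_\eta>0$, and on the region where $v_\eps$ is large one has $e^{\al v_\eps}\ge e^{(1-\eta)v_\eps}$, so $\iint e^{\al v_\eps}\,dx\,\calP(d\al)\ge c_\eta\int_\Om e^{(1-\eta)v_\eps}\,dx$. A short computation with the explicit bubble profile shows $\log\int_\Om e^{(1-\eta)v_\eps}\,dx\ge (1-\eta)\log\int_\Om e^{v_\eps}\,dx - C_\eta$ along the family (the factor $1-\eta$ rescales the critical exponent to $8\pi/(1-\eta)$), and choosing $\eta$ small enough that $\la>8\pi/(1-\eta)$ still holds, we conclude $\calJ(v_\eps)\to-\infty$. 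Hence $\calJ$, and therefore $\calF$, is unbounded below on $\widetilde\Gamma_\la$ when $\la>8\pi$.

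\textbf{Main obstacle.} The delicate point is Step~2, the necessity direction: one must produce test functions that simultaneously make the Dirichlet energy small relative to the exponential term \emph{and} feel the measure $\calP$ only through its behavior near $\al=1$ (or $\al=-1$). The convexity bound of Step~1 is lossless at the endpoints, but for a general $\calP$ supported away from $\pm1$ it would degrade the critical constant; the hypothesis $\supp\calP\cap\{-1,1\}\neq\emptyset$ is precisely the threshold condition that keeps the sharp constant equal to $8\pi$, and the technical work lies in showing that concentrating the bubble at a point where the ``$\al\approx1$'' contribution dominates recovers the full single-species blow-up rate despite the $(1-\eta)$ loss. I expect this to require a careful but standard analysis of the Moser bubble $v_\eps(x)=-\frac{1}{\la'}\log\bigl(\eps^2+|x-x_0|^2\bigr)^2$-type profiles and their $L^p$-exponential integrals, together with the remark (already used implicitly in the derivation of \eqref{eq:vintrofinal}) that without loss of generality the normalization constant can be absorbed, so that the admissible-set constraint $\iint\rhoa\,dx\,\calP(d\al)=\la$ translates cleanly into the $\la$ appearing in $\calJ$.
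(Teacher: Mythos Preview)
Your plan is correct in outline but takes a genuinely different route from the paper. The paper never passes through the duality of Theorem~\ref{thm:var}-(iv); it works directly on $\calF$. For the ``if'' part it sets $\psi_\rho(x):=\bigl|\int_I\al\rho(x,\al)\,\calP(d\al)\bigr|$, observes that $\int_\Om\psi_\rho\le\la\le8\pi$, writes $\calF(\rho)\ge\calF_0(\psi_\rho)+\bigl(\int_{\tO}f(\rho)-\int_\Om f(\psi_\rho)\bigr)$ with $f(t)=t(\log t-1)$, invokes Beckner's log-HLS inequality on $\calF_0$, and controls the bracket by Jensen's inequality for the convex function $f$. For the ``only if'' part it builds test functions directly in $\widetilde\Gamma_\la$, namely $\rho_\eps(x,\al)=\chi_{[1-\eta,1]}(\al)\,\psi_\eps(x)/\calP([1-\eta,1])$ with $\psi_\eps$ a Liouville bubble, and expands $\calF(\rho_\eps)$. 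Your approach instead transfers everything to $\calJ$ via duality, then uses the pointwise convexity bound $e^{\al v}\le e^v+e^{-v}$ together with the classical single-function Moser--Trudinger inequality. The two arguments are Legendre-dual to one another (your convexity bound on the exponential corresponds to the paper's Jensen bound on the entropy), and both ultimately rest on the same classical sharp inequality. What the paper's route buys is logical independence from the Moser--Trudinger statement~\eqref{eq:MTRZ} for $\calJ$: proving Theorem~\ref{thm:Beckner} directly on $\calF$ and \emph{then} invoking duality yields \eqref{eq:MTRZ} as a corollary, which is one of the paper's stated goals. Your route is shorter but reverses this logic.

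One technical caution on your Step~2: the intermediate inequality you write, $\log\int_\Om e^{(1-\eta)v_\eps}\ge(1-\eta)\log\int_\Om e^{v_\eps}-C_\eta$, actually fails for the standard bubble (H\"older gives the reverse, and the explicit computation yields a leading coefficient $1-2\eta$, not $1-\eta$). The correct statement is that after the substitution $w=(1-\eta)v$ the effective critical constant becomes $8\pi/(1-\eta)^2$, not $8\pi/(1-\eta)$; since this still tends to $8\pi$ as $\eta\to0$, your conclusion survives, but the displayed inequality should be replaced by a direct asymptotic computation of $\tfrac12\int|\nabla v_\eps|^2$ and $\log\int e^{(1-\eta)v_\eps}$ separately.
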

Here, $\supp\calP$ denotes the support of $\calP$, namely
\begin{equation}
\label{def:suppP}
\supp\calP:=\left\{\al\in[-1,1]:\ \calP(U)>0\ \hbox{for all open neighborhoods $U$ containing $\al$}\right\}.
\end{equation}
We observe that in view of the duality property stated in Theorem~\ref{thm:var}-(iv),
the inequality stated in Theorem~\ref{thm:Beckner}
is equivalent to the Moser-Trudinger type inequality \cite{Moser,Trudinger} derived in \cite{RiZe2012} and
given by
\begin{equation}
\label{eq:MTRZ}
\inf_{H_0^1(\Om)}\calJ>-\infty
\qquad
\hbox{if and only if }\la\le8\pi.
\end{equation}
The proof of Theorem~\ref{thm:Beckner} is independent of the results in \cite{RiZe2012},
hence here we also provide an alternative proof of \eqref{eq:MTRZ}.
\par
The remaining part of this article is devoted to the proofs of Theorem~\ref{thm:var}
and of Theorem~\ref{thm:Beckner}.
\section{Variational structures and proof of Theorem~\ref{thm:var}-(i)-(ii)-(iii)}
\label{sec:variational}
Henceforth, it will be convenient to denote
$I:=[-1,1]$
and to adopt the product space notation introduced in \cite{Neri}.
Namely, let
\begin{equation*}
\begin{aligned}
\tO:=\IO,
\qquad \tx:=(x,\al),
\qquad
\dtx:=dx\calP(d\al).
\end{aligned}
\end{equation*}
We denote
\[
\rho(\tx)=\rho(x,\al):=\rhoa(x).
\]
\subsubsection*{The full system~\eqref{eq:detsys} and the proof of Theorem~\ref{thm:var}-(i)}
In product space notation system \eqref{eq:detsys} takes the form:
\begin{equation}
\label{eq:tsys}
\left\{
\begin{aligned}
&\de_\al\frac{\pl\rho}{\pl t}=\Delta\rho-\al\dive(\rho\nabla v),&&\hbox{in\ }\tO\times(0,T)\\
&\ee\frac{\pl v}{\pl t}=\Delta v+\int_{I}\al\rho\,\calP(d\al),&&\hbox{in\ }\Omega\times(0,T)\\
&\nu\cdot(\nabla\rho-\al\rho\nabla v)=0,\ v=0, &&\hbox{on\ }\pl\Omega\times I\times (0,T)\\
&\rho(\tx,0)=\rho^0(\tx)\ge0,&&\hbox{in\ }\tO\\
&v(x,0)=v^0(x),&&\hbox{in\ }\Om.
\end{aligned}
\right.
\end{equation}
For $\rho\in L\log L(\tO)$, $\rho\ge0$ a.e.\ in $\tO$, and $v\in H_0^1(\Om)$, the functional $\calL$
defined in
\eqref{def:functionals} takes the form:
\begin{equation}
\label{def:L}
\calL(\rho,v)=\int_{\tO}\rho(\tx)(\log\rho(\tx)-1)\,\dtx+\frac{1}{2}\int_{\Om}|\nabla v|^2\,\dtx-\int_{\tO}\al\rho(\tx)v(x)\,\dtx.
\end{equation}
A formal proof of  Theorem~\ref{thm:var}-(i) is easily obtained by straightforward differentiation.
Indeed, for any $\varphi\in L^\infty(\tO)$, we note that formally (and rigorously, if the strict inequality $\rho>0$ holds true)
\begin{equation}
\begin{aligned}
\bra\calL_\rho(\rho,v),\varphi\ket_{L^2(\tO)}
=\int_{\tO}(\log\rho-\al v)\varphi\,\dtx,
\end{aligned}
\end{equation}
where $\bra\calL_\rho(\rho,v),\varphi\ket_{L^2(\tO)}=\frac{d}{ds}\calL(\rho+s\varphi,v)\vert_{s=0}$
denotes the usual G\^{a}teaux derivative.
In particular, along a solution $(\rho(\tx,t),v(x,t))$ to \eqref{eq:detsys} we formally have:
\begin{equation}
\begin{aligned}
\bra\calL_\rho(\rho,v),\rho_t\ket=&\int_{\tO}(\log\rho-\al v)\rho_t\,\dtx
=\int_{\tO}\frac{1}{\de_\al}(\log\rho-\al v)\dive(\rho\nabla(\log\rho-\al v))\,\dtx\\
=&\int_I\frac{\calP(d\al)}{\de_\al}\int_\Om(\log\rho-\al v)\dive(\rho\nabla(\log\rho-\al v))\,dx\\
=&-\int_{\tO}\frac{\rho}{\de_\al}|\nabla(\log\rho-\al v)|^2\,\dtx\le0.
\end{aligned}
\end{equation}
Similarly, for $\xi\in H_0^1(\Om)$ we compute:
\begin{equation*}
\bra\calL_v(\rho,v),\xi\ket=\int_{\tO}(\nabla v\cdot\nabla\xi-\al\rho\xi)\,\dtx
=-\int_{\tO}(\Delta v+\al\rho)\xi\,\dtx.
\end{equation*}
In particular, along a solution $(\rho(\tx,t),v(x,t))$ to \eqref{eq:detsys} we have:
\begin{equation*}
\begin{aligned}
\bra\calL_v(\rho,v),v_t\ket=&-\frac{1}{\ee}\int_{\tO}(\Delta v+\al\rho)\left(\Delta v+\int_I\al'\rho\,\calP(d\al')\right)\,\dtx\\
=&-\frac{1}{\ee}\int_\Om\left(\Delta v+\int_I\al\rho\,\calP(d\al)\right)^2\,dx\le0.
\end{aligned}
\end{equation*}
Thus, along solutions of \eqref{eq:detsys} we formally have the \textit{non-increase of $\calL$:}
\begin{equation}
\label{eq:Lnonincrease}
\frac{d}{dt}\calL(\rho(\tx,t),v(x,t))\le0
\qquad\hbox{for all }t\in(0,T).
\end{equation}
We now provide a rigorous proof of Theorem~\ref{thm:var}-(i),
by adapting an argument in \cite{Biler1992}.
\begin{proof}[Proof of Theorem~\ref{thm:var}-(i)]
Let $(\rho(x,t),v(x,t))$ be a fixed classical solution for \eqref{eq:detsys} and for $\de>0$ let
\[
g_\de(t):=\calL(\rho(x,t)+\de,v(x,t)).
\]
Then,
\[
g_\de(t)-g_\de(0)=\int_0^t\left\{\bra\calL_\rho(\rho+\de,v),\rho_t\ket
+\bra\calL_v(\rho+\de,v),v_t\ket\right\}.
\]
We compute, recalling that in product space notation $\rho=\rho(\tx)=\rho(x,\al)$:
\[
\begin{aligned}
\bra\calL_\rho(\rho+\de,v),\rho_t\ket
=&\int_{\tO}(\log(\rho+\de)-\al v)\,\rho_t\,\dtx
=\int_{\tO}\frac{1}{\de_\al}(\log(\rho+\de)-\al v)\dive(\nabla\rho-\al\rho\nabla v)\\
=&-\int_{\tO}\frac{1}{\de_\al}\nabla(\log(\rho+\de)-\al v)\cdot(\nabla\rho-\al\rho\nabla v)\\
=&-\int_{\tO}\frac{1}{\de_\al}\nabla(\log(\rho+\de)-\al v)\cdot(\nabla\rho-\al(\rho+\de)\nabla v+\al\de\nabla v)\\
=&-\int_{\tO}\frac{\rho+\de}{\de_\al}|\nabla(\log(\rho+\de)-\al v)|^2
-\de\int_{\tO}\frac{1}{\de_\al}\nabla(\log(\rho+\de)-\al v)\cdot\al\nabla v.
\end{aligned}
\]
Using the elementary identity
\begin{equation}
\label{eq:xiid}
|\nabla\log(\rho+\de)|^2=|\nabla(\log(\rho+\de)-\xi)|^2+|\nabla\xi|^2+2\nabla(\log(\rho+\de)-\xi)\cdot\nabla\xi,
\end{equation}
with $\xi=\al v$,
we may write
\[
\nabla(\log(\rho+\de)-\al v)\cdot\al\nabla v
=\frac{1}{2}\{|\nabla\log(\rho+\de)|^2-|\nabla(\log(\rho+\de)-\al v)|^2-|\al\nabla v|^2\}.
\]
We deduce that
\[
\begin{aligned}
\bra\calL_\rho(\rho+\de,v),\rho_t\ket
=-\int_{\tO}\frac{1}{\de_\al}(\rho+\frac{\de}{2})|\nabla(\log(\rho+\de)-\al v)|^2-\frac{\de}{2}\int_{\tO}\frac{1}{\de_\al}|\nabla\log(\rho+\de)|^2
+\frac{\de}{2}\int_{\tO}\frac{\al^2}{\de_\al}|\nabla v|^2.
\end{aligned}
\]
On the other hand, we have
\[
\begin{aligned}
\bra\calL_v(\rho+\de,v),v_t\ket
=&\int_{\tO}\nabla v\cdot\nabla v_t-\int_{\tO}\al(\rho+\de)v_t
=-\int_{\tO}(\Delta v+\al\rho)v_t-\de\int_{\tO}\al v_t\\
=&-\int_{\Om}(\Delta v+\int_I\al\rho\,\calP(d\al))v_t-\de\int_{I}\al\,\calP(d\al)\int_\Om v_t\\
=&-\frac{1}{\ee}\int_{\Om}(\Delta v+\int_I\al\rho\,\calP(d\al))^2-\de\int_{I}\al\,\calP(d\al)\int_\Om v_t.
\end{aligned}
\]
It follows that

\[
\begin{aligned}
g_\de(t)-g_\de(0)
=&-\int_0^t\int_{\tO}\frac{1}{\de_\al}\left(\rho+\frac{\de}{2}\right)|\nabla(\log(\rho+\de)-\al v)|^2
-\frac{\de}{2}\int_0^t\int_{\tO}\frac{1}{\de_\al}|\nabla\log(\rho+\de)|^2\,\dtx\\
&+\frac{\de}{2}\int_0^t\int_I\frac{\al^2}{\de_\al}\calP(d\al)\int_\Om|\nabla v|^2
-\frac{1}{\ee}\int_0^t\int_\Om\left(\Delta v+\int_I\al\rho\,\calP(d\al)\right)^2\\
&-\de\int_0^t\int_I\al\calP(d\al)\int_\Om v_t.
\end{aligned}
\]

We conclude that
\[
\begin{aligned}
g_\de(t)-g_\de(0)
&+\int_0^t\int_{\tO}\frac{1}{\de_\al}\left(\rho+\frac{\de}{2}\right)|\nabla(\log(\rho+\de)-\al v)|^2\\
\le&\frac{\de}{2}\int_0^t\int_I\frac{\al^2}{\de_\al}\calP(d\al)\int_\Om|\nabla v|^2
-\de\int_0^t\int_I\al\calP(d\al)\int_\Om v_t.
\end{aligned}
\]
By continuity of the function $s\mapsto s\log s$ at $0$,
we have
\[
\lim_{\de\to0^+}g_\de(t)=\calL(\rho(\tx,t),v(x,t)).
\]
Therefore, letting $\de\to0^+$ we obtain
\[
\calL(\rho(\tx,t),v(x,t))-\calL(\rho(\tx,0),v(x,0))
+\limsup_{\de\to0^+}\int_0^t\int_{\tO}\frac{1}{\de_\al}(\rho+\frac{\de}{2})|\nabla(\log(\rho+\de)-\al v)|^2\le0.
\]
Hence, the asserted decreasing properties of $\calL$ are established.
\end{proof}
\subsubsection*{The case $\de_\al>0$, $\ee=0$ and the proof of Theorem~\ref{thm:var}-(ii)}
In product space notation, system~\eqref{eq:rhosysintro} takes the form
\begin{equation}
\label{eq:rhosys}
\left\{
\begin{aligned}
&\de_\al\frac{\pl\rho}{\pl t}=\Delta\rho-\al\dive(\rho\nabla v),&&\hbox{in\ }\tO\times(0,T)\\
&-\Delta v=\int_I\al\rho\,\calP(d\al),&&\hbox{in\ }\Omega\times(0,T)\\
&\nu\cdot(\nabla\rhoa-\al\rhoa\nabla v)=0,\ v=0, &&\hbox{on\ }\pl\Omega\times(0,T),\ \al\in[-1,1]\\
&\rho(\tx,0)=\rho^0(\tx)\ge0,&&\hbox{in\ }\tO.
\end{aligned}
\right.
\end{equation}
We first recall that the Green function $G(\cdot,\cdot)$ for $-\Delta$ in $\Om$
with Dirichlet boundary conditions is defined for $x,y\in\Om$, $x\neq y$, by
\begin{equation}
\label{def:G}
\begin{cases}
-\Delta_xG(x,y)=\de_y,&\mbox{in\ }\Om\\
G(\cdot,y)=0,&\mbox{on\ }\pl\Om.
\end{cases}
\end{equation}
By means of $G$ we may define a symmetric kernel $\tG(x,y,\al,\beta)$ for
$(x,y,\al,\beta)\in\tO\times\tO$, $x\neq y$,
with corresponding convolution operator
defined by
\begin{equation}
\label{def:tG}
(\tG\ast\rho)(x,\al)=\int_{\tO}G(x,y)\rho(y,\beta)\,dy\calP(d\beta).
\end{equation}
We note that we may write:
\begin{equation*}
\begin{aligned}
\int_{\tO}\al\rho\,\tG\ast(\al\rho)\,\dtx
=&\int_{\tO}\al\rho(x,\al)\int_{\tO}G(x,y)\beta\rho(y,\beta)\,dy\calP(d\beta)\\
=&\iint_{\tO^2}\al\beta\,G(x,y)\rho(x,\al)\rho(y,\beta)\,dxdy\calP(d\al)\calP(d\beta).
\end{aligned}
\end{equation*}
Therefore, the functional $\calF$ may be equivalently written in the form
\begin{equation*}
\calF(\rho):=\int_{\tO}\rho(\log\rho-1)-\frac{1}{2}\int_{\tO}\al\rho\,\tG\ast(\al\rho).
\end{equation*}
For later use, we observe that we may also write:
\begin{equation}
\label{eq:Gastrho}
\int_{\tO}\al\rho\,\tG\ast(\al\rho)\,\dtx
=\int_\Om\left(\int_I\al\rho\,\calP(d\al)\right)G\ast\left(\int_I\al\rho\,\calP(d\al)\right)\,dx.
\end{equation}
From \eqref{eq:rhosys} we deduce that
\[
v=\tG\ast(\al\rho)=G\ast\left(\int_I\al\rho\,\calP(d\al)\right).
\]
\begin{proof}[Proof of Theorem~\ref{thm:var}-(ii)]
Similarly as above, for $\de>0$ let
\[
h_\de(t):=\calF(\rho(\tx,t)+\de).
\]
Then, using the symmetry of $\tG$, we compute
\[
\begin{aligned}
h_\de'(t)=&\int_{\tO}\left\{\log(\rho+\de)-\al\tG\ast(\al(\rho+\de))\right\}\rho_t\\
=&\int_{\tO}\frac{1}{\de_\al}\left\{\log(\rho+\de)-\al\tG\ast(\al(\rho+\de))\right\}
\dive(\nabla\rho-\al\rho\nabla\tG\ast(\al\rho))\\
=&-\int_{\tO}\frac{1}{\de_\al}\nabla\left\{\log(\rho+\de)-\al\tG\ast(\al\rho))\right\}
\cdot\{\nabla\rho-\al(\rho+\de)\nabla\tG\ast(\al\rho)+\al\de\nabla\tG\ast(\al\rho)\}\\
&-\de\int_{\tO}\frac{\al}{\de_\al}\nabla\tG\ast\al\cdot\{\nabla\rho-\al\rho\nabla\tG\ast(\al\rho)\}\\
=&-\int_{\tO}\frac{\rho+\de}{\de_\al}|\nabla\{\log(\rho+\de)-\al\tG\ast(\al\rho)\}|^2-I-II
\end{aligned}
\]
where
\[
\begin{aligned}
I:=&\int_{\tO}\frac{\de}{\de_\al}\nabla\{\log(\rho+\de)-\al\tG\ast(\al\rho)\}\cdot\al\nabla\tG\ast(\al\rho),\\
II:=&\de\int_{\tO}\frac{\al}{\de_\al}\nabla\tG\ast\al\cdot\{\nabla\rho-\al\rho\nabla\tG\ast(\al\rho)\}.
\end{aligned}
\]
Using \eqref{eq:xiid} with $\xi=\al\tG\ast(\al\rho)$, we have
\[
\begin{aligned}
\nabla\{\log(\rho+\de)-\al\tG\ast(\al\rho)\}\cdot\al\nabla\tG\ast(\al\rho)
=&\frac{1}{2}|\nabla\log(\rho+\de)|^2-\frac{1}{2}|\nabla(\log(\rho+\de)-\al\tG\ast(\al\rho)|^2\\
&\qquad\qquad-\frac{1}{2}|\nabla\al\tG\ast(\al\rho)|^2.
\end{aligned}
\]
Therefore,
\[
\begin{aligned}
h_\de'(t)=-\int_{\tO}\frac{1}{\de_\al}\left(\rho+\frac{\de}{2}\right)|\nabla(\log(\rho+\de)&-\al\tG\ast(\al\rho))|^2
-\frac{\de}{2}\int_{\tO}\frac{1}{\de_\al}|\nabla\log(\rho+\de)|^2\\
&\qquad+\frac{\de}{2}\int_{\tO}\frac{1}{\de_\al}|\nabla\al\tG\ast(\al\rho)|^2-II
\end{aligned}
\]
We conclude that
\[
\begin{aligned}
h_\de(t)-h_\de(0)+&\int_0^t\int_{\tO}\frac{1}{\de_\al}\left(\rho+\frac{\de}{2}\right)|\nabla(\log(\rho+\de)-\al\tG\ast(\al\rho))|^2\\
\le&\frac{\de}{2}\int_0^t\int_{\tO}\frac{1}{\de_\al}|\nabla\al\tG\ast(\al\rho)|^2
-\de\int_0^t\int_{\tO}\al\nabla\tG\ast\al\cdot\{\nabla\rho-\al\rho\nabla\tG\ast(\al\rho)\}.
\end{aligned}
\]
Now we observe that $\lim_{\de\to0^+}h_\de(t)=\calF(\rho(\tx,t))$.
Therefore, letting $\de\to0^+$, we obtain
\[
\calF(\rho(\tx,t))-\calF(\rho(\tx,0))
+\limsup_{\de\to0^+}\int_0^t\int_{\tO}\frac{1}{\de_\al}\left(\rho+\frac{\de}{2}\right)|\nabla(\log(\rho+\de)-\al\tG\ast(\al\rho))|^2\le0,
\]
and the asserted monotonicity property for $\calF(\rho(\tx,t))$ follows.
\par
If the decrease is not strict, then $\nabla(\log\rho-\al\tG\ast(\al\rho))\equiv0$.
In view of \eqref{eq:rhosys2}, we conclude that the solution is stationary.
\end{proof}
\subsubsection*{The case $\de_\al=0$, $\ee=1$ and the proof of  Theorem~\ref{thm:var}-(iii)}
In product space notation system~\eqref{eq:ellpar} takes the form
\begin{equation}
\label{eq:sys2}
\left\{
\begin{aligned}
&\Delta\rho-\al\dive(\rho\nabla v)=0,&&\hbox{in\ }\tO\times(0,T)\\
&\frac{\pl v}{\pl t}=\Delta v+\int_I\al\rho\,\calP(d\al),&&\hbox{in\ }\Omega\times(0,T)\\
&\nu\cdot(\nabla\rho-\al\rho\nabla v)=0,\ v=0, &&\hbox{on\ }\pl\Omega\times I\times(0,T)\\
&v(x,0)=v^0(x),&&\hbox{in\ }\Omega.
\end{aligned}
\right.
\end{equation}
\begin{proof}[Proof of  Theorem~\ref{thm:var}-(iii)]
We observe that for every fixed $\al\in I$, $t\in(0,T)$ we may write
\begin{equation}
\label{eq:rhovid}
\nabla\rho-\al\rho\nabla v=e^{\al v}\nabla(e^{-\al v}\rho).
\end{equation}
Multiplying the first equation in \eqref{eq:sys2} by $e^{-\al v}\rho$ and integrating,
in view of the no-flux boundary condition, we have:
\begin{equation*}
\begin{aligned}
0=\int_{\pl\Omega}e^{-\al v}\rho\nu\cdot(\nabla\rho-\al\rho\nabla v)-\int_\Om e^{\al v}|\nabla(e^{-\al v}\rho)|^2
=-\int_\Om e^{\al v}|\nabla(e^{-\al v}\rho)|^2.
\end{aligned}
\end{equation*}
We deduce that $\nabla(e^{-\al v}\rho)=0$ a.e.\ in $\Omega$, and consequently
\begin{equation}
\label{eq:rhoexpv}
\rho(x,\al,t)=C_\al(t) e^{\al v(x,t)}
\end{equation}
for some $C_\al(t)\ge0$.
We shall assume that $C_\al(t)$ is independent of $\al$. We note that such an assumption does not affect the dynamics of
the population species $\rho$, which only depends on $\nabla v.$
Assuming the mass conservation~\eqref{eq:averagemassintro}, we derive from \eqref{eq:sys2}--\eqref{eq:rhoexpv}
the following evolution problem for $v$:
\begin{equation}
\label{eq:MFevol}
\left\{
\begin{aligned}
&\frac{\pl v}{\pl t}=\Delta v+\la\frac{\int_I\al e^{\al v}\,\calP(d\al)}{\iint_{\IO}e^{\al v}\,\calP(d\al)dx},
&&\hbox{in\ }\Om\times(0,T)\\
&v(x,t)=0,&&\hbox{on }\pl\Om\times(0,T)\\
&v(x,0)=v^0(x),&&\hbox{in\ }\Om.
\end{aligned}
\right.
\end{equation}
We recall from \eqref{def:functionals} that
\begin{equation*}
\calJ(v)=\frac{1}{2}\int_\Om|\nabla v|^2\,dx-\la\log\int_{\tO}e^{\al v}\,\dtx+\la(\log\la-1),
\qquad v\in H_0^1(\Om).
\end{equation*}
It is readily checked that \eqref{eq:MFevol} is the gradient flow for $\mathcal J_\la$.
\end{proof}
\section{Duality and Proof of Theorem~\ref{thm:var}-(iv)}
\label{sec:duality}
We recall from \eqref{def:functionals} that $\calL$ is defined for $\rho\in L\log L(\tO)$, $\rho\ge0$, and $v\in H_0^1(\Om)$ by 
\begin{equation*}
\calL(\rho,v):=\int_{\tO}\rho(\log\rho-1)\,\dtx
+\frac{1}{2}\int_\Om|\nabla v|^2\,dx-\int_{\tO}\al\rho v\,\dtx
\end{equation*}
and
\begin{equation*}
{\tilde\Gamma_\la}:=\left\{\rho\in L\log L(\tO):\ \rho\ge0\ \mathrm{a.e.\ in}\ \tO,\ \int_{\tO}\rho(\tx)\,\dtx=\la\right\}.
\end{equation*}
The main properties needed to establish Theorem~\ref{thm:var}-(iv) are contained in the following statement.
\begin{prop}
\label{prop:minimizer}
For every fixed $v\in H_0^1(\Om)$ there exists $\rho_v\in\tA$ such that
\begin{equation*}
\inf_{\tA}\calL(\cdot,v)=\calL(\rho_v,v).
\end{equation*}
Moreover, $\rho_v$ satisfies
\begin{equation}
\label{eq:rhovform}
\rho_v=\la\frac{e^{\al v}}{\int_{\tO}e^{\al v}\,\dtx},\qquad \hbox{a.e.\ in\ }\tO.
\end{equation}
\end{prop}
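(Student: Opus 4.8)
The plan is to reduce the minimization of $\calL(\cdot,v)$ over $\tA$ to a relative-entropy estimate and then invoke Jensen's inequality. Since $v$ is fixed, the Dirichlet term $\tfrac12\int_\Om|\nabla v|^2\,dx$ is a constant, so it suffices to minimize
\[
E(\rho):=\int_{\tO}\big(\rho\log\rho-\rho-\al\rho v\big)\,\dtx
\]
over $\rho\in L\log L(\tO)$ with $\rho\ge0$ and $\int_{\tO}\rho\,\dtx=\la$. First I would record that the normalizing constant $Z:=\int_{\tO}e^{\al v}\,\dtx$ is finite and positive: positivity is obvious, and finiteness follows from $|\al v|\le|v|$ together with the two-dimensional Moser--Trudinger inequality, which gives $e^{|v|}\in L^p(\Om)$ for every $p<\infty$, hence $e^{\al v}\in L^1(\tO)$ after integrating in $\al$ against the probability measure $\calP$. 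The same integrability shows that the candidate $\rho_v:=\la Z^{-1}e^{\al v}$ belongs to $L\log L(\tO)$: indeed $\rho_v\,|\log\rho_v|\le\rho_v\big(|\log(\la/Z)|+|v|\big)$ and $\rho_v|v|\le\la Z^{-1}e^{|v|}|v|$ is integrable by H\"older; moreover $\rho_v\in\tA$ by the choice of $Z$.

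Next I would rewrite, for any admissible $\rho$ and using $\int_{\tO}\rho\,\dtx=\la$,
\[
E(\rho)=\int_{\tO}\rho\log\frac{\rho}{e^{\al v}}\,\dtx-\la=\int_{\tO}g\log g\,d\nu-\la,
\qquad d\nu:=e^{\al v}\,\dtx,\quad g:=\frac{\rho}{e^{\al v}},
\]
where $\nu$ is a finite measure on $\tO$ of total mass $Z$ and $\int_{\tO}g\,d\nu=\int_{\tO}\rho\,\dtx=\la$. (One must first observe that $\int_{\tO}\al\rho v\,\dtx$ is finite for $\rho\in L\log L(\tO)$: this follows from the $L\log L$--$\exp L$ Orlicz duality in each fibre $\Om$, since $v\in H_0^1(\Om)$ lies in the exponential class in dimension two, together with $\rho\in L^1([-1,1],L\log L(\Om);\calP)$.) Applying Jensen's inequality to the strictly convex function $\phi(s)=s\log s$ and the probability measure $Z^{-1}d\nu$ yields
\[
\frac1Z\int_{\tO}g\log g\,d\nu\ \ge\ \phi\!\left(\frac1Z\int_{\tO}g\,d\nu\right)=\frac{\la}{Z}\log\frac{\la}{Z},
\]
so $E(\rho)\ge\la\log(\la/Z)-\la$ for every admissible $\rho$, with equality if and only if $g$ is $\nu$-a.e.\ constant, i.e.\ $\rho=\la Z^{-1}e^{\al v}=\rho_v$. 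A direct computation gives $E(\rho_v)=\la\log(\la/Z)-\la$, so the lower bound is attained exactly at $\rho_v$; adding back $\tfrac12\int_\Om|\nabla v|^2\,dx$ yields $\inf_{\tA}\calL(\cdot,v)=\calL(\rho_v,v)$ together with the asserted formula \eqref{eq:rhovform} (and, as a by-product, uniqueness of the minimizer).

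The computation is short and amounts to the classical Gibbs variational principle; the steps that need genuine care are the two integrability statements above — finiteness of $Z$ and of the coupling term $\int_{\tO}\al\rho v\,\dtx$ on $\tA$ — which rely respectively on the Moser--Trudinger inequality and on the $L\log L$--$\exp L$ duality, together with the check that Jensen's inequality is being applied to a bona fide nonnegative integrand on the finite measure space $(\tO,\nu)$. I expect the handling of the non-reflexive $L\log L$ framework, rather than any convexity argument, to be the only delicate point; once the coupling term is known to be finite on $\tA$, the rest is routine.
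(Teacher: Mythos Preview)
Your proof is correct and takes a genuinely different route from the paper. The paper proceeds by compactness: it first proves two truncation lemmas (Lemmas~\ref{lem:Linftybound} and~\ref{lem:positivity}) showing that any minimizing sequence may be replaced by one bounded in $L^\infty(\tO)$ and that a minimizer must be strictly positive, then extracts a weak $L^p$ limit, invokes convexity of $\calL(\cdot,v)$ for lower semicontinuity, and finally derives \eqref{eq:rhovform} via a Lagrange-multiplier argument on the level sets $\{\rho_v>\de\}$. You instead recognize the problem as the Gibbs variational principle and obtain the sharp lower bound $E(\rho)\ge\la\log(\la/Z)-\la$ directly by Jensen's inequality, with equality precisely at $\rho_v$.

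Your approach is shorter, bypasses all compactness issues in the non-reflexive space $L\log L(\tO)$, and yields uniqueness of the minimizer for free. It also treats the case $v\in H_0^1(\Om)$ directly via Moser--Trudinger, whereas the paper's truncation lemmas are stated under the additional hypothesis $v\in L^\infty(\Om)$. Conversely, the paper's argument is more flexible: the truncation-plus-compactness scheme would adapt to constraints or integrands for which no closed-form minimizer is available, while your argument relies on the explicit conjugacy between $s\log s$ and the exponential. One small point of phrasing: you refer to applying Jensen to a ``nonnegative integrand'', but $g\log g$ is merely bounded below by $-e^{-1}$; this is of course enough for Jensen once $g\in L^1(\nu)$, and your finiteness check for $\int_{\tO}\al\rho v\,\dtx$ (via the $L\log L$--$\exp L$ duality) guarantees that $\int g\log g\,d\nu$ is a well-defined real number.
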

Before we proceed further with the proof of Proposition~\ref{prop:minimizer} 
we need to state and prove two auxiliary results. We first point out that a minimizing sequence $\rho_n\in\tA$ for $\calL(\cdot,v)$ may be taken
uniformly bounded in $L^\infty(\tO)$ and
moreover the minimizer $\rho_v$ satisfies $\rho_v>0$ a.e.\ in $\tO$, following an approach established  in \cite{RicciardiSuzuki}.
The underlying idea is that, since the nonlinearity
\begin{equation}
\label{def:f}
f(t)=t(\log t-1)
\end{equation}
blows up at infinity and attains a strictly negative minimum given by $\min f=f(1)=-1$,
the minimizing sequence $\rho_n$ may be modified so that
$0\le\rho_n\le M$ for some $M>0$ independent of $n$, a.e.\ in $\tO$, without increasing the value of $\calL(\cdot,v)$,
and the minimizer $\rho_v$ satisfies $\rho_v>0$ a.e.\ in $\tO$.
Then, the proof of Proposition~\ref{prop:minimizer}  easily follows.
\begin{lemma}
\label{lem:Linftybound}
For any fixed $v\in H_0^1(\Om)\cap L^\infty(\Om)$ there exists $M>0$ depending only on $\tO,\la$ and $v$
such that for any $\rho\in\tA$
there exists $\rho^*\in\tA$
such that $0\le\rho^*\le M$ and
\[
\calL(\rho^*,v)\le\calL(\rho,v).
\]
\end{lemma}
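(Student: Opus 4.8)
The idea is a rearrangement/truncation argument in the spirit of \cite{RicciardiSuzuki}. Fix $v\in H_0^1(\Om)\cap L^\infty(\Om)$ and $\rho\in\tA$. The quantity $\int_{\tO}\al\rho v\,\dtx$ is controlled by $\|v\|_{L^\infty}\int_{\tO}\rho\,\dtx=\la\|v\|_{L^\infty}$, uniformly in $\rho\in\tA$, so the only term that can be large is $\int_{\tO}f(\rho)\,\dtx$ with $f(t)=t(\log t-1)$. Since $f$ is convex, decreasing on $(0,1)$ and increasing on $(1,\infty)$ with $\min f=f(1)=-1$, any mass of $\rho$ sitting at very high values is energetically expensive; the strategy is to redistribute such excess mass onto the (large) region where $\rho$ is small, thereby lowering $\int_{\tO}f(\rho)$ while changing $\int_{\tO}\al\rho v$ by a controlled amount.

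\textbf{Key steps.} First I would choose the truncation level. Pick $K>1$ large, to be fixed below, and split $\tO=\{\rho>K\}\cup\{\rho\le K\}$. Let $m:=\int_{\{\rho>K\}}(\rho-K)\,\dtx$ be the excess mass above level $K$; note $m\le\la$. Define $\rho^{*}:=\min\{\rho,K\}$ on $\{\rho>K\}$, and on $\{\rho\le K\}$ add back the mass $m$ in a way that keeps $\rho^{*}\le M$ for a suitable $M$: concretely, since $|\tO|=|\Om|$ (as $\calP$ is a probability measure) and $\int_{\{\rho\le K\}}\rho\,\dtx\le\la$, the sub-level set $\{\rho\le K\}$ has measure $\ge |\Om|-\la/K$, which is bounded below once $K>2\la/|\Om|$, say; so one can spread the mass $m$ uniformly over a subset of $\{\rho\le \tfrac{K}{2}\}$ of measure $\ge c_0>0$, raising $\rho$ there by at most $m/c_0\le\la/c_0$, hence keeping $\rho^{*}\le K$ there as well provided $K\ge \la/c_0$. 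Thus $0\le\rho^{*}\le M$ with $M:=K$ depending only on $\tO$, $\la$ (choosing $K$ as the max of the two lower bounds above). By construction $\int_{\tO}\rho^{*}\,\dtx=\int_{\tO}\rho\,\dtx=\la$, so $\rho^{*}\in\tA$.

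\textbf{Comparing energies.} It remains to check $\calL(\rho^{*},v)\le\calL(\rho,v)$. For the entropy term, on $\{\rho>K\}$ we replaced $\rho$ by $K$; since $f$ is increasing on $(1,\infty)$ and $\rho>K>1$, we have $f(\rho^{*})=f(K)\le f(\rho)$ pointwise there, so this region contributes a decrease of at least $\int_{\{\rho>K\}}(f(\rho)-f(K))\,\dtx$. On the region where we added mass, $\rho$ was $\le K/2$ and we raised it; because $f$ may increase there, the entropy could go up, but only by a bounded amount: the added density is $\le\la/c_0$ and on the relevant set $0\le\rho^{*}\le K$, so $|f(\rho^{*})-f(\rho)|\le C(K,\la)$ pointwise, giving an increase of at most $C(K,\la)\,c_0$. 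The point is that this increase is a \emph{fixed} constant, whereas the decrease $\int_{\{\rho>K\}}(f(\rho)-f(K))\,\dtx$ can be made to dominate it: if $\int_{\{\rho>K\}}(\rho-K)\,\dtx=m$ is small, then little mass was moved and the added-mass increase is correspondingly small (quantify $|f(\rho^{*})-f(\rho)|$ by $m$ via convexity); if $m$ is bounded away from $0$, then since $f(t)/t\to\infty$ as $t\to\infty$, enlarging $K$ forces $\int_{\{\rho>K\}}f(\rho)\,\dtx$ to exceed any prescribed constant — here one uses that $\int_{\{\rho>K\}}f(\rho)\,\dtx\ge \tfrac{1}{2}\log K\int_{\{\rho>K\}}\rho\,\dtx$ for $K$ large. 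Finally the linear term changes by $\int_{\tO}\al(\rho^{*}-\rho)v\,\dtx$, bounded by $\|v\|_{L^\infty}\int_{\tO}|\rho^{*}-\rho|\,\dtx\le 2m\|v\|_{L^\infty}$, again controlled by the same excess mass $m$. Balancing these three contributions and choosing $K$ (hence $M$) sufficiently large depending only on $\tO,\la,v$ yields $\calL(\rho^{*},v)\le\calL(\rho,v)$.

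\textbf{Main obstacle.} The delicate point is the bookkeeping in the borderline regime where $m$ is neither small nor large: one must show that the gain from truncating the tail of $\rho$ always beats the loss from redistributing, \emph{uniformly} over all $\rho\in\tA$, with $M$ independent of the particular $\rho$. The clean way to handle this is to separate the two regimes by a threshold on $m$ and use convexity of $f$ (to bound the loss linearly in $m$) together with the superlinear growth $f(t)\gtrsim t\log t$ (to bound the gain below by $m\log K$), so that a single choice of $K$ works in both. The superlinearity of $f$ at infinity — the same feature exploited when passing to weak limits in the non-reflexive space $L\log L(\tO)$ — is exactly what makes the argument close.
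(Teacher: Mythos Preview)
Your overall strategy---truncate at a level $K$, redistribute the excess mass $m=\int_{\{\rho>K\}}(\rho-K)$ onto a low-density region, then compare entropies---matches the paper's. The gap is in the entropy bookkeeping on the redistribution set. You spread on (a subset of) $\{\rho\le K/2\}$ and first bound the entropy increase pointwise by $C(K,\la)$, obtaining a total $C(K,\la)\,c_0$ that is \emph{not} proportional to $m$; this cannot be beaten by the gain $\ge m\log K$ when $m$ is small. Your proposed remedy---bounding the increase linearly in $m$ via convexity---runs into a second problem: the convexity estimate $f(\rho^*)-f(\rho)\le f'(\rho^*)(\rho^*-\rho)$ with $\rho^*$ ranging up to order $K$ yields a loss of order $m\log K$, the \emph{same} order as the gain. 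After accounting for the linear-term change $\le 2m\|v\|_\infty$, the net balance is $m\bigl(O(1)-2\|v\|_\infty\bigr)$, which need not be $\le0$ when $\|v\|_\infty$ is large. The case-split on $m$ cannot repair this, since both regimes suffer from the same $K$-dependence of the loss constant.

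The clean fix---and this is what the paper does---is to redistribute on a set where $\rho$ is bounded by a constant \emph{independent of the truncation level}: take $\tilE:=\{\rho\le 2\la/|\Om|\}$. One has $|\tilE|\ge|\Om|/2$ from the mass constraint, so the added density $m/|\tilE|\le 2\la/|\Om|$ is also $M$-independent; the mean value theorem then bounds the entropy increase on $\tilE$ by $C(\la,\Om)\,m$ with $C$ not depending on $M$. Now the gain $m\log M$ dominates $m\bigl(C+2\|v\|_\infty\bigr)$ once $M$ is chosen large depending only on $\la,\Om,\|v\|_\infty$, and $\calL(\rho^*,v)-\calL(\rho,v)\le(-\log M+O(1))\,m\le0$ uniformly in $\rho\in\tA$. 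Note that $\tilE\subset\{\rho\le K/2\}$ for $K\ge4\la/|\Om|$, so you were essentially one good choice of subset away from a complete argument.
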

\begin{proof}
For a fixed $M>2\la/|\Om|$ we define:
\[
\tilA:=\{\tx\in\tO:\ \rho\ge M\},
\qquad
\tilE:=\left\{\tx\in\tO:\ \rho\le \frac{2\la}{|\Om|}\right\},
\qquad
\km:=\int_{\tilA}(\rho-M).
\]
We claim that
\begin{equation}
\label{eq:Elb}
|\tilE|\ge\frac{|\Om|}{2}.
\end{equation}
Indeed, we have:
\[
\la=\int_{\tO}\rho\,\dtx=\int_{\tilE}\rho\,\dtx+\int_{\tO\setminus\tilE}\rho\,\dtx\ge\frac{2\la}{|\Om|}(|\tO|-|\tilE|)
=2\la\left(1-\frac{|\tilE|}{|\tO|}\right),
\]
where we used the fact $|\tO|=\calP(I)|\Om|=|\Om|$.
This implies \eqref{eq:Elb}.
\par
We also note that $\km\le\la$ and therefore, in view of \eqref{eq:Elb}:
\begin{equation}
\label{eq:kmest}
\frac{\km}{|\tilE|}\le\frac{2\la}{|\Om|}.
\end{equation}
We define:
\begin{equation}
\rho^*:=M\chi_{\tilA}+\rho\chi_{\tO\setminus(\tilA\cup\tilE)}+\left(\rho+\frac{\km}{|\tilE|}\right)\chi_{\tilE}.
\end{equation}
It is readily checked that $\rho^*\in\tA$, indeed we have:
\begin{equation*}
\begin{aligned}
\int_{\tO}\rho^*=&M|\tilA|+\int_{\tO\setminus(\tilA\cup\tilE)}\rho\,\dtx+\int_{\tilE}\rho\,\dtx+\km\\
=&M|\tilA|+\int_{\tO\setminus\tilA}\rho\,\dtx+\int_{\tilA}(\rho-M)\,\dtx=\int_{\tO}\rho\,\dtx=\la.
\end{aligned}
\end{equation*}
We write:
\[
\begin{aligned}
\calL(\rho^*,v)-\calL(\rho,v)=\int_{\tilA}[f(M)-f(\rho)]
+\int_{\tilE}\left[f\left(\rho+\frac{\km}{|\tilE|}\right)-f(\rho)\right]
-\int_{\tO}\al(\rho^*-\rho)v.
\end{aligned}
\]
Using the Mean Value Theorem, we estimate:
\[
\begin{aligned}
\int_{\tilA}[f(\rho)-f(M)]=\int_{\tilA}f'(M+\theta(x)(\rho-M))(\rho-M)
\ge\log M\int_{\tilA}(\rho-M)=\km\log M,
\end{aligned}
\]
where $0\le\theta(x)\le1$.
Similarly, we have
\begin{equation}
\label{eq:tilEest}
\int_{\tilE}\left[f\left(\rho+\frac{\km}{|\tilE|}\right)-f(\rho)\right]\le\km C(f,\la),
\end{equation}
where $C(f,\la)=\max_{1/2\le s\le4\la/|\Om|}|f'(s)|$.
Indeed, since $f$ is decreasing on $[0,1]$, if $\km/|\tilE|\le1/2$,
we readily have
\[
\int_{\tilE\cap\{0\le\rho\le1/2\}}\left[f(\rho)-f\left(\rho+\frac{\km}{|\tilE|}\right)\right]\ge0.
\]
If $\km/|\tilE|\ge1/2$, then $0\le\rho+\km/|\tilE|-1/2\le\km/|\tilE|$
and therefore
\[
\begin{aligned}
\int_{\tilE\cap\{0\le\rho\le1/2\}}&
\left[f(\rho)-f\left(\rho+\frac{\km}{|\tilE|}\right)\right]
\ge\int_{\tilE\cap\{0\le\rho\le1/2\}}\left[f\left(\frac{1}{2}\right)-f\left(\rho+\frac{\km}{|\tilE|}\right)\right]\\
&=\int_{\tilE\cap\{0\le\rho\le1/2\}}f'\left(\frac{1}{2}+\theta(x)\left(\rho+\frac{\km}{|\tilE|}-\frac{1}{2}\right)\right)\left(\rho+\frac{\km}{|\tilE|}-\frac{1}{2}\right)\\
&\ge-\km\max_{1/2\le s\le1/2+2\la/|\Om|}|f'(s)|.
\end{aligned}
\]
Hence, \eqref{eq:tilEest} is established.
Finally, we have
\[
\begin{aligned}
\left|\int_{\tO}(\rho^*-\rho)\al v\right|\le\left|\int_{\tilA}(\rho^*-\rho)\al v\right|+\left|\int_{\tilE}(\rho^*-\rho)\al v\right|
\le&\int_{\tilA}|\rho-M|\|v\|_\infty+\km\|v\|_\infty\\
\le&2\km\|v\|_\infty.
\end{aligned}
\]
We conclude that
\[
\calL(\rho^*,v)-\calL(\rho,v)\le(-\log M+C(f,\la)+2\|v\|_\infty)\km
=(-\log M+O(1))\km
\]
and the asserted statement follows by letting $M\to+\infty$.
\end{proof}
For $\rho\in\tA$ we define
\[
\tilA:=\left\{\tx\in\tO:\rho(\tx)\ge\frac{\la}{2|\Om|}\right\}
\qquad\mbox{and}\qquad\tilE:=\{\tx\in\tO:\rho(\tx)=0\}.
\]
\begin{lemma}
\label{lem:positivity}
Fix $v\in H_0^1(\Om)\cap L^\infty(\Om)$.
Suppose that $|\tilE|>0$. Then, there exists $\rho_*\in\tA$ such that
$\rho_*>0$ a.e.\ in $\tO$ and
\[
\calL(\rho_*,v)-\calL(\rho,v)<0.
\]
\end{lemma}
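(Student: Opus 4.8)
The plan is to produce $\rho_*$ by transferring a small amount of mass from the set $\tilA=\{\rho\ge\la/(2|\Om|)\}$, where $\rho$ is bounded away from $0$, onto the null set $\tilE=\{\rho=0\}$, and to exploit the fact that $f'(t)=\log t\to-\infty$ as $t\to0^+$ (here $f(t)=t(\log t-1)$ as in \eqref{def:f}), so that the entropy term $\int_{\tO}f(\rho)\,\dtx$ strictly decreases at first order in the transferred mass, while all remaining contributions are only of first order with bounded coefficients. Concretely, for $t\in(0,\la/(4|\Om|))$ I would set $s:=t|\tilA|/|\tilE|$ and consider the competitor
\[
\rho_*^{(t)}:=(\rho-t)\,\chi_{\tilA}+\rho\,\chi_{\tO\setminus(\tilA\cup\tilE)}+s\,\chi_{\tilE}.
\]

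First I would verify admissibility and strict positivity. Since $\rho<\la/(2|\Om|)$ on $\tO\setminus\tilA$ and $|\tO|=\calP(I)|\Om|=|\Om|$, one has $\int_{\tO\setminus\tilA}\rho\,\dtx\le\la/2$, hence $\int_{\tilA}\rho\,\dtx\ge\la/2>0$, so in particular $|\tilA|>0$ (and $|\tilE|>0$ by hypothesis). Because $s|\tilE|=t|\tilA|$ and $\rho\equiv0$ on $\tilE$, a direct computation gives $\int_{\tO}\rho_*^{(t)}\,\dtx=\int_{\tO}\rho\,\dtx=\la$, so $\rho_*^{(t)}\in\tA$; moreover $\rho_*^{(t)}\ge\la/(2|\Om|)-t>0$ on $\tilA$, $\rho_*^{(t)}=\rho>0$ on $\tO\setminus(\tilA\cup\tilE)$, and $\rho_*^{(t)}=s>0$ on $\tilE$, whence $\rho_*^{(t)}>0$ a.e.\ in $\tO$.

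Next I would estimate $\calL(\rho_*^{(t)},v)-\calL(\rho,v)$ by splitting $\tO$ into $\tilA$, $\tilE$ and the remainder, on which $\rho_*^{(t)}=\rho$ contributes nothing. Writing $f(\rho-t)-f(\rho)=-\int_0^t\log(\rho-\tau)\,d\tau$ and using $\la/(4|\Om|)\le\rho-\tau\le\rho$ on $\tilA$ together with $\int_{\tilA}|\log\rho|\,\dtx<\infty$ (valid since $|\log\rho|\le\rho+|\log(\la/(2|\Om|))|$ on $\tilA$ and $\rho\in L\log L(\tO)\subset L^1(\tO)$), I obtain $\big|\int_{\tilA}[f(\rho-t)-f(\rho)]\,\dtx\big|\le C_1\,t$ with $C_1=C_1(\Om,\la,\rho)$ finite. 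On $\tilE$, since $\rho=0$ and $f(0)=0$,
\[
\int_{\tilE}[f(s)-f(\rho)]\,\dtx=|\tilE|\,s(\log s-1)=t|\tilA|\log t+t|\tilA|\big(\log|\tilA|-\log|\tilE|-1\big),
\]
while the coupling term satisfies $\big|\int_{\tO}\al(\rho_*^{(t)}-\rho)v\,\dtx\big|\le\|v\|_\infty\big(t|\tilA|+s|\tilE|\big)=2t|\tilA|\,\|v\|_\infty$. Collecting these,
\[
\calL(\rho_*^{(t)},v)-\calL(\rho,v)\le t|\tilA|\log t+C_2\,t=t\big(|\tilA|\log t+C_2\big)
\]
for all small $t>0$, with $C_2=C_2(\Om,\la,\rho,v)$ finite. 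Since $|\tilA|>0$, choosing $t>0$ small enough that $|\tilA|\log t+C_2<0$ makes the right-hand side strictly negative; setting $\rho_*:=\rho_*^{(t)}$ for such a $t$ proves the lemma. Together with Lemma~\ref{lem:Linftybound}, this gives the strictly positive minimizer $\rho_v$ of Proposition~\ref{prop:minimizer}.

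The step I expect to require the most care, rather than bookkeeping, is the bound on the entropy change over $\tilA$, i.e.\ the finiteness of $\int_{\tilA}|\log\rho|\,\dtx$ when $\rho$ may be unbounded there; this is exactly where $\rho\in L\log L(\tO)$ (and the lower bound $\rho\ge\la/(2|\Om|)$ on $\tilA$) enters. Everything else is a first-order expansion that succeeds precisely because $t\log t=o(t)$ as $t\to0^+$ and carries the favourable negative sign, thereby dominating the $O(t)$ errors coming from $\tilA$, from the constants produced on $\tilE$, and from the linear coupling with $v$.
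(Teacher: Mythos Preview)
Your proof is correct and is essentially the paper's own argument: you build the same mass-transfer competitor (up to the reparametrization $t_{\text{paper}}=t\,|\tilA|$) and reach the same conclusion $\calL(\rho_*,v)-\calL(\rho,v)\le t\bigl(|\tilA|\log t+C_2\bigr)<0$ for small $t>0$. One cosmetic slip in your closing commentary: the dominance works because $|\tilA|\log t+C_2\to-\infty$, not because ``$t\log t=o(t)$'' (which is false, since $(t\log t)/t=\log t\to-\infty$); this does not affect the actual argument.
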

\begin{proof}
We claim that $|\tilA|>0$.
Indeed, if it is not the case, we have $\rho\le\la/(2|\Om|)$ a.e.\ in $\tO$.
It follows that
\[
\la=\int_{\tO}\rho\,\dtx\le|\tO|\frac{\la}{2|\Om|}=\frac{\la}{2},
\]
a contradiction.
Thus, we may define
\[
\varphi:=\frac{\chi_{\tilE}}{|\tilE|}-\frac{\chi_{\tilA}}{|\tilA|}
=\begin{cases}
|\tilE|^{-1},&\mathrm{in}\ \tilE\\
0,&\mathrm{in}\ \tO\setminus(\tilA\cup\tilE)\\
-|\tilA|^{-1},&\mathrm{in}\ \tilA.
\end{cases}
\]
For $t>0$ sufficiently small we set
\[
\rho_*:=\rho+t\varphi.
\]
We note that since $\int_{\tO}\varphi\,\dtx=0$, we have $\rho_*\in\tA$.
Using the identity
\[
\int_{\tO}(\rho+t\varphi)(\log(\rho+t\varphi)-1)-\int_{\tO}\rho(\log\rho-1)
=\int_{\tO}\rho(\log(\rho+t\varphi)-\log\rho)+\int_{\tO}t\varphi(\log(\rho+t\varphi)-1),
\]
we may write:
\[
\begin{aligned}
\calL(\rho_*,v)-\calL(\rho,v)
=&\int_{\tO}\rho(\log(\rho+t\varphi)-\log\rho)+\int_{\tO}t\varphi(\log(\rho+t\varphi)-1)-\int_{\tO}\al t\varphi v\\
=&\int_{\tilA}\rho\left(\log\left(\rho-\frac{t}{|\tilA|}\right)-\log\rho\right)
+\int_{\tilE}\frac{t}{|\tilE|}\left(\log\frac{t}{|\tilE|}-1\right)\\
&\qquad-\int_{\tilA}\frac{t}{|\tilA|}\left(\log\left(\rho-\frac{t}{|\tilA|}\right)-1\right)
+\int_{\tilA}\al\frac{t}{|\tilA|}v-\int_{\tilE}\al\frac{t}{|\tilE|}v\\
=&t\left\{\left(\log\frac{t}{|\tilE|}-1\right)+\frac{1}{|\tilA|}\int_{\tilA}\frac{\rho|\tilA|}{t}\log\left(1-\frac{t}{\rho|\tilA|}\right)\right.\\
&\left. \qquad-\frac{1}{|\tilA|}\int_{\tilA}\left[\log\left(\rho-\frac{t}{|\tilA|}\right)-1\right]-\frac{1}{|\tilE|}\int_{\tilE}\al v
+\frac{1}{|\tilA|}\int_{\tilA}\al v\right\}\\
=&t\{\log t+O(1)\}
\end{aligned}
\]
as $t\to0^+$, where in order to derive the last line we used the fact
\[
\frac{|\tilA|}{t}\int_{\tilA}\rho\left(\log\left(\rho-\frac{t}{|\tilA|}\right)-\log\rho\right)
=\int_{\tilA}\frac{|\tilA|\rho}{t}\log\left(1-\frac{t}{|\tilA|\rho}\right)=O(1).
\]
We conclude that $\calL(\rho_*,v)-\calL(\rho,v)<0$ for sufficiently small values of $t>0$.
\end{proof}
\begin{proof}[Proof of Proposition~\ref{prop:minimizer}]
In view of Lemma~\ref{lem:Linftybound}, we may assume that the minimizing sequence $\rho_n$ is uniformly bounded
in $L^\infty(\tO)$. In particular, it is uniformly bounded in $L^p(\tO)$ for all $1<p<+\infty$.
Consequently, there exists $\rho_v\in L^p(\tO)$ such that, up to subsequences, $\rho_n\rightharpoonup\rho_v\in\tilde\Gamma_\la$
weakly in $L^p(\tO)$, for all $1<p<+\infty$.
By convexity of $\calL(\cdot,v)$, $\rho_v$ is the desired minimizer.
We are left to establish \eqref{eq:rhovform}.
To this end, for every $\de>0$ we define
$\Lambda_\de:=\{\rho_v>\de\}$ and $U_\de:=\{\varphi\in L^\infty(\tO):\ \|\varphi\|_{L^\infty(\tO)}<\de/2\}$.
We can differentiate the function $\calL(\rho_v+t\chi_{\Lambda_\de}\varphi,v)$ with respect to $t$
with constraint $\int_{\tO}\chi_{\Lambda_\de}\varphi\,\dtx=0$ at $t=0$.
We thus obtain that
\[
\log\rho_v-\al v=C\qquad\hbox{a.e.\ in\ }\Lambda_\de,
\]
where $C$ is a Lagrange multiplier. Since for $\de'<\de$ we have $\Lambda_{\de'}\supset\Lambda_\de$, we conclude that
$C$ does not depend on $\de$.
Hence, \eqref{eq:rhovform} holds true in $\bigcup_{\de>0}\Lambda_\de$.
In view of Lemma~\ref{lem:positivity}, we have $|\tO\setminus\bigcup_{\de>0}\Lambda_\de|=0$.
\par
Since $\rho_v\in\tA$, we conclude that
\begin{equation}
\label{eq:rhov}
\rho_v=\la\frac{e^{\al v}}{\int_{\tO}e^{\al v}\,\dtx}
\qquad\hbox{a.e.\ in\ }\tO.
\end{equation}
Now the proof of Proposition~\ref{prop:minimizer} is complete.
\end{proof}
\begin{proof}[Proof of Theorem~\ref{thm:var}-(iv)]
We claim that
\begin{equation}
\label{eq:dualityLJ}
\inf_{\tA}\calL(\cdot,v)=\calL(\rho_v,v)=\calJ(v).
\end{equation}
Indeed, from \eqref{eq:rhov} we derive that
\[
\begin{aligned}
\log\rho_v=\al v-\log\int_{\tO}e^{\al v}+\log\la.
\end{aligned}
\]
We compute
\[
\begin{aligned}
\calL(\rho_v,v)
=&\int_{\tO}\rho_v(\log\rho_v-1)+\frac{1}{2}\int_\Om|\nabla v|^2-\int_{\tO}\al\rho_v v\\
=&\int_{\tO}\rho_v\left(\al v-\log\int_{\tO}e^{\al v}+\log\la-1\right)+\frac{1}{2}\int_\Om|\nabla v|^2-\int_{\tO}\al\rho_v v\\
=&\frac{1}{2}\int_\Om|\nabla v|^2-\la\log\int_{\tO}e^{\al v}+\la(\log\la-1)=\calJ(v),
\end{aligned}
\]
where we used $\int_{\tO}\rho_v=\la$ to derive the last line.
Thus, \eqref{eq:dualityLJ} is established.
\par
Similarly, we claim that for every fixed $\rho\in\tA$ there holds
\begin{equation}
\label{eq:dualityLF}
\inf_{H_0^1(\Om)}\calL(\rho,\cdot)=\calF(\rho).
\end{equation}
Indeed, it is standard to check that $\inf_{H_0^1(\Om)}\calL(\rho,\cdot)$ is attained at the solution $v_\rho\in H_0^1(\Om)$
of the following
\[
-\Delta v_\rho=\int_I\al\rho\,\calP(d\al)\quad\hbox{in\ }\Om,
\qquad
v_\rho=0\quad\hbox{on\ }\pl\Om.
\]
We observe that
\[
\int_\Om|\nabla v_\rho|^2=\int_{\Om}(-\Delta v_\rho)v_\rho
=\int_\Om\int_I\al\rho\,\calP(d\al) v_\rho\,dx
=\int_\Om\int_I\al\rho\,\calP(d\al)\,G\ast\int_I\al\rho\,\calP(d\al).
\]
In view of the above and \eqref{eq:Gastrho} we deduce:
\[
\begin{aligned}
\calL(\rho,v_\rho)
=&\int_{\tO}\rho(\log\rho-1)-\frac{1}{2}\int_{\tO}\int_I\al\rho\calP(d\al) v_\rho\\
=&\int_{\tO}\rho(\log\rho-1)-\frac{1}{2}\int_\Om\int_I\al\rho\calP(d\al)G\ast\int_I\al\rho\calP(d\al).
\end{aligned}
\]
Now the proof of Theorem~\ref{thm:var}-(iv) follows from \eqref{eq:dualityLJ} and \eqref{eq:dualityLF}.
\end{proof}
\section{Critical mass and proof of Theorem~\ref{thm:Beckner}}
\label{sec:Beckner}
In order to prove Theorem~\ref{thm:Beckner} we set
\[
\Gamma_\la=\left\{\psi\in L\log L(\Om):\ \psi\ge0\ \hbox{a.e.\ in $\Om$},\ \int_\Om\psi=\la\right\}.
\]
We recall that $f(t)=t(\log t-1)$ for $t\ge0$, see \eqref{def:f}.
For $\psi\in\Gamma_\la$ let
\[
\calF_0(\psi)=\int_\Om\psi(\log\psi-1)\,dx-\frac{1}{2}\int_\Om\psi\,G\ast\psi\,dx.
\]
The following sharp logarithmic Hardy-Littlewood-Sobolev inequality is due to Beckner.
\begin{lemma}[\cite{Beckner}]
\label{lem:Beckner}
The functional~$\calF_0$ is bounded from below on $\Gamma_\la$ if and only if $\la\le8\pi$.
\end{lemma}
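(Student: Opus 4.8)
The plan is to establish the two implications separately, importing the sharp mass threshold from Beckner's planar logarithmic HLS inequality and transporting it to the bounded domain $\Om$ through the standard splitting of the Green function. Throughout I write $G(x,y)=-\frac{1}{2\pi}\log|x-y|+h(x,y)$, where for fixed $y$ the regular part $h(\cdot,y)$ is harmonic in $\Om$ with boundary values $\frac{1}{2\pi}\log|\cdot-y|$; since $|x-y|\le D:=\mathrm{diam}\,\Om$ on $\bar\Om$, the maximum principle yields $h\le\frac{1}{2\pi}\log D$ on $\Om\times\Om$. I abbreviate $A(\psi):=\iint_{\Om\times\Om}\psi(x)\psi(y)\log|x-y|\,dx\,dy$ and record the one-sided bound $A(\psi)\le\la^2\log D$, valid because each $\psi\in\Gamma_\la$ is supported in $\Om$ and $|x-y|\le D$ there.

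For the necessity of $\la\le8\pi$ I would produce, when $\la>8\pi$, a sequence in $\Gamma_\la$ along which $\calF_0\to-\infty$. Fixing an interior point $x_0\in\Om$ and setting $\psi_\ee:=\frac{\la}{\pi\ee^2}\chi_{B_\ee(x_0)}$ with $\ee$ small enough that $B_\ee(x_0)\subset\Om$, one has $\psi_\ee\in\Gamma_\la$, the entropy $\int_\Om\psi_\ee(\log\psi_\ee-1)=2\la\log\frac1\ee+O(1)$, and, after the rescaling $x=x_0+\ee u$ in the interaction integral (the bounded regular part $h$ contributing only $O(1)$), $\frac{1}{2}\int_\Om\psi_\ee\,G\ast\psi_\ee=\frac{\la^2}{4\pi}\log\frac1\ee+O(1)$. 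Therefore
\[
\calF_0(\psi_\ee)=\Big(2\la-\frac{\la^2}{4\pi}\Big)\log\frac1\ee+O(1)=\frac{\la}{4\pi}(8\pi-\la)\log\frac1\ee+O(1),
\]
which diverges to $-\infty$ as $\ee\to0^+$ exactly when $\la>8\pi$.

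For the sufficiency I would extend $\psi\in\Gamma_\la$ by zero to $\rr^2$ and apply the planar logarithmic HLS inequality in its scale-invariant form: for some finite $C_0(\la)$ one has $\int_{\rr^2}\psi\log\psi+\frac{2}{\la}A(\psi)\ge C_0(\la)$ (the attendant moment condition is automatic, since $\psi$ is compactly supported). Writing $\calF_0(\psi)=\int_\Om\psi\log\psi-\la+\frac{1}{4\pi}A(\psi)-\frac{1}{2}\iint_{\Om\times\Om}\psi(x)\psi(y)h(x,y)\,dx\,dy$, bounding the last term below by $-\frac{\la^2}{4\pi}\log D$ via $h\le\frac{1}{2\pi}\log D$, and inserting the log-HLS lower bound for the entropy, I obtain
\[
\calF_0(\psi)\ge C_0(\la)+\Big(\frac{1}{4\pi}-\frac{2}{\la}\Big)A(\psi)-\la-\frac{\la^2}{4\pi}\log D.
\]
When $\la\le8\pi$ the coefficient $\frac{1}{4\pi}-\frac{2}{\la}$ is nonpositive, so together with $A(\psi)\le\la^2\log D$ the middle term is bounded below by the constant $(\frac{1}{4\pi}-\frac{2}{\la})\la^2\log D$, whence $\calF_0$ is bounded from below on $\Gamma_\la$.

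The entire effect of the threshold is concentrated in the coefficient matching $\frac{1}{4\pi}=\frac{2}{\la}\iff\la=8\pi$, so once the two implications are split the argument is almost entirely soft: it only verifies that the regular part $h$ and the finite size of $\Om$ cannot corrupt the balance between entropy and interaction. The genuine obstacle is therefore the planar logarithmic HLS inequality itself---the existence of a finite lower bound for the scale-invariant combination $\int\psi\log\psi+\frac{2}{\la}A(\psi)$, valid for every mass---which is Beckner's theorem (equivalently, dual to the Onofri/Moser--Trudinger inequality) and which my reduction takes as a black box; no elementary estimate replaces it.
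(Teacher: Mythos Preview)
The paper does not prove this lemma at all: it is stated with a citation to Beckner and used as a black box (via Corollary~\ref{cor:Beckner}) in the proof of Theorem~\ref{thm:Beckner}. Your proposal therefore goes further than the paper, supplying an explicit reduction of the bounded-domain statement to the scale-invariant planar logarithmic HLS inequality, which is the actual content of Beckner's theorem. The argument is correct: the Green function splitting $G=-\tfrac{1}{2\pi}\log|x-y|+h$ with the maximum-principle bound $h\le\tfrac{1}{2\pi}\log D$ cleanly isolates the singular part, the coefficient matching $\tfrac{1}{4\pi}=\tfrac{2}{\lambda}$ at $\lambda=8\pi$ is exactly what makes the remainder controllable, and the concentrating characteristic functions give the ``only if'' direction with the right leading order $\tfrac{\lambda}{4\pi}(8\pi-\lambda)\log\tfrac{1}{\ee}$. (The paper's own ``only if'' computations with Liouville bubbles in Section~\ref{sec:appA} are for the multi-species functional $\calF$, not for $\calF_0$.) Your honest acknowledgment that the planar inequality itself is imported, not reproved, is appropriate---that is precisely how the paper treats the lemma too.
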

We shall need the following slightly more general result, which follows directly from Lemma~\ref{lem:Beckner}.
\begin{cor}
\label{cor:Beckner}
There holds:
\[
\inf\left\{\calF_0(\psi):\ \psi\in\bigcup_{\la\le8\pi}\Gamma_\la\right\}>-\infty.
\]
\end{cor}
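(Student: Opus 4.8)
The plan is to reduce the claim, for each $\la\in(0,8\pi]$, to Lemma~\ref{lem:Beckner} applied at the single value $\la=8\pi$, by rescaling the mass constraint and using the nonnegativity of the Dirichlet Green's function $G$.

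First I would record the behaviour of $\calF_0$ under $\psi\mapsto c\psi$ for $c>0$. If $\psi\in\Gamma_\la$ then $c\psi\in\Gamma_{c\la}$, and since $f(t)=t(\log t-1)$ satisfies $f(ct)=c\,f(t)+(c\log c)\,t$, a direct computation gives
\[
\calF_0(c\psi)=c\,\calF_0(\psi)+(c\log c)\,\la-(c^2-c)\,W(\psi),
\]
where $W(\psi):=\tfrac12\int_\Om\psi\,(G\ast\psi)\,dx\ge0$, since $G\ge0$ on $\Om\times\Om$ and $\psi\ge0$. Given $\psi\in\Gamma_\la$ with $0<\la\le8\pi$, I would choose $c:=8\pi/\la\ge1$, so that $c\psi\in\Gamma_{8\pi}$ and $(c^2-c)\,W(\psi)\ge0$; the identity above then yields $\calF_0(c\psi)\le c\,\calF_0(\psi)+8\pi\log c$.

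Finally I would invoke Lemma~\ref{lem:Beckner} at $\la=8\pi$: the constant $m_0:=\inf_{\Gamma_{8\pi}}\calF_0$ is finite, so the previous inequality gives $m_0\le c\,\calF_0(\psi)+8\pi\log c$, whence
\[
\calF_0(\psi)\ \ge\ \frac{m_0}{c}-8\pi\,\frac{\log c}{c}\ \ge\ -|m_0|-\frac{8\pi}{e},
\]
where I used $c\ge1$ in the first term and $\sup_{c>0}(\log c)/c=1/e$ in the second. Since this bound depends neither on $\psi$ nor on $\la$ (and the degenerate case $\la=0$, if included, is trivial because $\calF_0(0)=0$), the corollary follows with the explicit constant $-|m_0|-8\pi/e$. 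There is no genuine obstacle here; the only point worth noting is that Lemma~\ref{lem:Beckner} in isolation provides, for each $\la$, merely \emph{some} finite lower bound, with no information on how it depends on $\la$, and the purpose of the rescaling is exactly to convert this into a bound uniform over $\la\in(0,8\pi]$ — the inequalities $W(\psi)\ge0$ and $c\ge1$ being what make the rescaling go in the favourable direction.
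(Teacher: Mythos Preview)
Your proof is correct and follows essentially the same approach as the paper: both arguments rescale between $\Gamma_\la$ and $\Gamma_{8\pi}$, use the positivity of $G$ (hence of $W(\psi)$) to drop the quadratic cross term, and then invoke Lemma~\ref{lem:Beckner} at $\la=8\pi$ together with the elementary bound $\sup_{c>0}(\log c)/c=1/e$ (equivalently $t\log t\ge -1/e$). The only cosmetic difference is that the paper scales \emph{down} from $\Gamma_{8\pi}$ by a factor $t\in[0,1]$, whereas you scale \emph{up} to $\Gamma_{8\pi}$ by $c=8\pi/\la\ge1$; these are inverse operations and lead to the same explicit lower bound $-|m_0|-8\pi/e$ (the paper writes it as $\min\{m_0,0\}-8\pi/e$).
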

\begin{proof}
Let $\psi\in\Gamma_\la$ and let $0\le t\le1$.
We compute:
\[
\begin{aligned}
\calF_0(t\psi)
=&\int_\Om t\psi(\log(t\psi)-1)-\frac{t^2}{2}\int_\Om\psi\,G\ast\psi
=\int_\Om t\psi(\log\psi+\log t-1)-\frac{t^2}{2}\int_\Om\psi\,G\ast\psi\\
=&t\int_\Om\psi(\log\psi-1)+t\log t\int_\Om\psi-\frac{t^2}{2}\int_\Om\psi\,G\ast\psi\\
=&t\left\{\int_\Om\psi(\log\psi-1)-\frac{t}{2}\int_\Om\psi\,G\ast\psi\right\}+\la\,t\log t.
\end{aligned}
\]
Since $\int_\Om\psi\,G\ast\psi\ge0$, and using the fact $t\log t\ge-e^{-1}$, we deduce that
\[
\calF_0(t\psi)\ge t\calF_0(\psi)-\frac{\la}{e}\ge\min\left\{\inf_{\Gamma_\la}\calF_0,0\right\}-\frac{\la}{e}.
\]
The claim follows.
\end{proof}
\begin{proof}[Proof of Theorem~\ref{thm:Beckner}, \lq\lq if" part]
Setting
\[
\psi_\rho(x):=\left|\int_I\al\rho(x,\al)\,\calP(d\al)\right|
\]
we find that
\begin{equation}
\label{eq:psirhoest}
0\le\psi_\rho(x)\le\int_I\rho(x,\al)\,\calP(d\al)
\end{equation}
and therefore
\[
\int_\Om\psi_\rho\le\int_{\tO}\rho\,\dtx=\la.
\]
In particular, we have
\begin{equation}
\label{eq:psirhoprop}
\psi_\rho\in\bigcup_{\la\le8\pi}\Gamma_\la.
\end{equation}
In view of \eqref{eq:Gastrho} and \eqref{def:f}, we may write
\[
\calF(\rho)=\int_{\tO}f(\rho)-\frac{1}{2}\int_\Om\left(\int_I\al\rho\right)\,\tG\ast\left(\int_I\al\rho\right).
\]
Consequently, we have
\[
\calF(\rho)\ge\int_{\tO}f(\rho)-\frac{1}{2}\int_\Om\psi_\rho\,G\ast\psi_\rho
=\int_{\tO}f(\rho)-\int_\Om f(\psi_\rho)+\calF_0(\psi_\rho).
\]
In view of \eqref{eq:psirhoprop} and Corollary~\ref{cor:Beckner}, we are thus reduced to show that
\begin{equation}
\label{eq:MTsuffcond}
\inf_{\tA}\left\{\int_{\tO}f(\rho)\,\dtx
-\int_\Om f(\psi_\rho)\,dx\right\}>-\infty.
\end{equation}
Since $f$ is convex and $\calP(I)=1$, in view of Jensen's inequality we have, for every \emph{fixed} $x\in\Om$, that
\[
f\left(\int_I\rho(x,\al)\,\calP(d\al)\right)\le\int_If(\rho(x,\al))\,\calP(d\al).
\]
Integrating over $\Om$ we deduce that
\[
\int_\Om f\left(\int_I\rho(x,\al)\,\calP(d\al)\right)dx\le\int_{\tO}f(\rho)\,\dtx.
\]
In order to complete the proof, we observe that from
\eqref{eq:psirhoest} and
some elementary properties of the nonlinearity $f$, in particular the fact
$f(t)\ge-1$ for all $t\ge0$, we obtain
\[
f(\psi_\rho)\le f\left(\int_I\rhoa\,\calP(d\al)\right)+1.
\]
This concludes the proof of the \lq\lq if part" of Theorem~\ref{thm:Beckner}.
\end{proof}
For the proof of the \lq\lq only if" part we may use the same test functions as may be found, e.g., in \cite{RiZe2012}.
For $\eps>0$ let $\Ue$ be the radial \lq\lq Liouville bubble" defined by
\begin{equation}
\label{def:bubble}
\Ue(x):=\log\frac{8\eps^2}{(\eps^2+|x|^2)^2}.
\end{equation}
It is well known that the functions $\Ue$ satisfy
\begin{equation}
\label{eq:Liouville}
\begin{cases}
-\Delta U=e^U&\mbox{in\ }\rr^2\\
\int_{\rr^2}e^U<+\infty,
\end{cases}
\end{equation}
and moreover there holds
\[
\int_{\rr^2}e^{\Ue}=8\pi,
\qquad\mbox{for all }\eps>0.
\]
Without loss of generality we assume that $0\in\Om$.
Let
\begin{equation}
\label{def:psie}
\psie:=\la\frac{e^{\Ue}}{\int_\Om e^{\Ue}}.
\end{equation}
Clearly, $\psie\in\Gamma_\la$ for all $\eps>0$.
We first establish a lemma for the functions $\psie$ defined in \eqref{def:psie}.
\begin{lemma}
\label{lem:psieexp}
The following expansions hold true.
\begin{enumerate}
  \item[(i)]
$\int_\Om\psie\log\psie=\la\log\frac{1}{\eps^2}+O(1)$;
  \item[(ii)]
$\int_\Om\psie\,G\ast\psie=\frac{\la^2}{8\pi+o(1)}\log\frac{1}{\eps^4}+O(1)$.
\end{enumerate}
\end{lemma}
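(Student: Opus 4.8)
The plan is to compute both integrals explicitly, exploiting the scaling $x=\eps y$, which concentrates the measure $e^{\Ue}\,dx$ at the origin (recall $0\in\Om$). The only preliminary needed is the normalization $\int_\Om e^{\Ue}=8\pi+O(\eps^2)$ as $\eps\to0^+$, which follows from $\int_{\rr^2}e^{\Ue}=8\pi$ and the crude tail bound $\int_{\rr^2\setminus\Om}e^{\Ue}\le\int_{|x|\ge d}\tfrac{8\eps^2}{|x|^4}\,dx=O(\eps^2)$, valid since $\Om\supset B_d(0)$ for some $d>0$; in particular $\int_\Om e^{\Ue}$ stays bounded and bounded away from $0$. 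Writing $\eps^{-1}\Om:=\{y\in\rr^2:\eps y\in\Om\}$, the change of variables $x=\eps y$ turns $e^{\Ue(x)}\,dx$ into $\tfrac{8}{(1+|y|^2)^2}\,dy$, the set $\eps^{-1}\Om$ exhausts $\rr^2$, and the same tail estimate gives $\int_{\eps^{-1}\Om}\tfrac{8}{(1+|y|^2)^2}\,dy=8\pi+O(\eps^2)$.

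\emph{Proof of (i).} From $\log\psie=\log\la+\Ue-\log\int_\Om e^{\Ue}$ one gets
\[
\int_\Om\psie\log\psie=\la\Bigl(\log\la-\log\int_\Om e^{\Ue}\Bigr)+\frac{\la}{\int_\Om e^{\Ue}}\int_\Om e^{\Ue}\,\Ue,
\]
the first summand being $O(1)$. Writing $\Ue(x)=\log 8+\log\eps^2-2\log(\eps^2+|x|^2)$ and rescaling $x=\eps y$ (so $\log(\eps^2+|x|^2)=\log\eps^2+\log(1+|y|^2)$),
\[
\int_\Om e^{\Ue}\,\Ue=(\log 8-\log\eps^2)\bigl(8\pi+O(\eps^2)\bigr)-2\int_{\eps^{-1}\Om}\frac{8\log(1+|y|^2)}{(1+|y|^2)^2}\,dy.
\]
The last integral converges as $\eps\to0$ to the finite constant $\int_{\rr^2}\tfrac{8\log(1+|y|^2)}{(1+|y|^2)^2}\,dy$, hence is $O(1)$, so $\int_\Om e^{\Ue}\,\Ue=8\pi\log\tfrac1{\eps^2}+O(1)$. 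Substituting back and using $\tfrac{8\pi}{\int_\Om e^{\Ue}}=1+O(\eps^2)$ together with $\eps^2\log\tfrac1{\eps^2}\to0$ yields (i).

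\emph{Proof of (ii).} Decompose $G(x,y)=\tfrac1{2\pi}\log\tfrac1{|x-y|}+h(x,y)$, with $h$ the regular part, continuous on $\Om\times\Om$. The term $\iint_{\Om\times\Om}\psie(x)\psie(y)h(x,y)\,dx\,dy$ is $O(1)$: on a ball $B_r(0)$ with $\overline{B_r(0)}\subset\Om$ the kernel $h$ is bounded, so that part is $O(\la^2)=O(1)$, while in the complementary region ($|x|\ge r$ or $|y|\ge r$) one combines the escaping-mass bound $\int_{\Om\setminus B_r(0)}\psie=O(\eps^2)$ with $\iint_{\Om\times\Om}|h|<\infty$ to get $O(\eps^2)$. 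For the singular part, writing $\psie(x)\psie(y)=\tfrac{\la^2}{(\int_\Om e^{\Ue})^2}e^{\Ue(x)}e^{\Ue(y)}$ and rescaling $x=\eps\xi$, $y=\eps\eta$ (so $\log\tfrac1{|x-y|}=\log\tfrac1\eps+\log\tfrac1{|\xi-\eta|}$),
\[
\iint_{\Om\times\Om}e^{\Ue(x)}e^{\Ue(y)}\log\tfrac1{|x-y|}\,dx\,dy
=\log\tfrac1\eps\Bigl(\int_{\eps^{-1}\Om}\tfrac{8\,d\xi}{(1+|\xi|^2)^2}\Bigr)^2
+\iint_{(\eps^{-1}\Om)^2}\frac{64\log\tfrac1{|\xi-\eta|}\,d\xi\,d\eta}{(1+|\xi|^2)^2(1+|\eta|^2)^2}.
\]
The first term equals $(8\pi+O(\eps^2))^2\log\tfrac1\eps=16\pi^2\log\tfrac1{\eps^4}+O(1)$, and the second converges as $\eps\to0$ to a finite constant (the integrand has only an integrable logarithmic singularity on the diagonal, and the quartic weight dominates the logarithmic growth at infinity), hence is $O(1)$. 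Multiplying through by $\tfrac{\la^2}{2\pi(\int_\Om e^{\Ue})^2}$, adding the $O(1)$ contribution of $h$, and using $(\int_\Om e^{\Ue})^2=(8\pi)^2(1+o(1))$, one gets $\int_\Om\psie\,G\ast\psie=\tfrac{\la^2}{8\pi+o(1)}\log\tfrac1{\eps^4}+O(1)$, which is (ii).

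\emph{Main obstacle.} The calculus is routine once the rescaling is set up; the two delicate points are (a) verifying that the limiting full‑plane integrals $\int_{\rr^2}\tfrac{8\log(1+|y|^2)}{(1+|y|^2)^2}\,dy$ and $\iint_{(\rr^2)^2}\tfrac{64\log(1/|\xi-\eta|)}{(1+|\xi|^2)^2(1+|\eta|^2)^2}\,d\xi\,d\eta$ are finite — for the double integral both the diagonal logarithmic singularity and the logarithmic growth at infinity of $\log\tfrac1{|\xi-\eta|}$ must be controlled against the quartic decay — together with $O(\eps^2)$ control of the difference between these and their dilated‑domain truncations; and (b) isolating the regular part $h$ of the Green function and showing its contribution is $O(1)$ although $\psie$ is not compactly supported, which is exactly where the escaping‑mass bound and $\iint_{\Om\times\Om}|h|<\infty$ enter. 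The rest is merely keeping track that all the $O(\eps^2\log\eps)$ remainders are $O(1)$.
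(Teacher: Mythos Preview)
Your proof is correct. Part~(i) is essentially the paper's own argument (its Lemma~6.2 performs the same rescaling $x=\eps y$ and isolates the same finite integral $\int\frac{8\log(1+|y|^2)}{(1+|y|^2)^2}\,dy$).

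For part~(ii) the two routes genuinely differ. You decompose $G(x,y)=\frac{1}{2\pi}\log\frac{1}{|x-y|}+h(x,y)$ and evaluate the logarithmic double integral directly via the joint rescaling $(x,y)=(\eps\xi,\eps\eta)$, then argue separately that the $h$-contribution is $O(1)$. The paper instead exploits the PDE identity $G\ast e^{\Ue}=P\Ue$ (valid because $-\Delta\Ue=e^{\Ue}$), invokes the known projection expansion $P\Ue=\Ue-\log(8\eps^2)+8\pi H(x,0)+O(\eps^2)$ from \cite{EGP}, and thereby reduces $\int_\Om e^{\Ue}\,G\ast e^{\Ue}$ to the already-computed quantity $\int_\Om e^{\Ue}\Ue$. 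Your approach is more self-contained (no external expansion required) and makes transparent exactly where the $\log\frac{1}{\eps^4}$ comes from; the paper's approach is quicker once the projection formula is granted and avoids the double-integral bookkeeping.

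One small point: the sentence ``combines the escaping-mass bound $\int_{\Om\setminus B_r}\psie=O(\eps^2)$ with $\iint_{\Om\times\Om}|h|<\infty$ to get $O(\eps^2)$'' is not literally a proof---the two ingredients as stated do not combine to give that bound. What actually works is the \emph{pointwise} bound $\psie\le C_r\eps^2$ on $\{|x|\ge r\}$, together with $\sup_{x\in\Om,\,y\in B_r}|h(x,y)|<\infty$ (maximum principle, since for $y\in B_r$ the boundary data $\frac{1}{2\pi}\log|x-y|$ is bounded on $\partial\Om$); the remaining piece $\{|x|\ge r,|y|\ge r\}$ uses $\psie(x)\psie(y)=O(\eps^4)$ and $\iint|h|<\infty$. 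The conclusion $O(1)$ is of course correct.
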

The proof of Lemma~\ref{lem:psieexp} is straightforward; the details are provided in the Appendix.
\par
Now we can conclude the proof of Theorem~\ref{thm:Beckner}.
\begin{proof}[Proof of Theorem~\ref{thm:Beckner}, \lq\lq only if" part]
Assuming that $\la>8\pi$, we provide a family of functions $\rho_\eps\in\widetilde\Gamma_{\lambda}$
such that
\begin{equation}
\label{eq:Finfty}
\calF(\rho_\eps)\to-\infty
\quad\hbox{as\ }\eps\to0^+.
\end{equation}
We assume that $\supp\calP\ni1$, the remaining case being completely analogous.
Let $0<\eta<1$. Then, $\calP([1-\eta,1])>0$.
For all $\eps>0$ we define
\[
\rhoe(\tx)=\rhoe(x,\al):=\la\,\frac{\chi_{[1-\eta,1]}(\al)}{\calP([1-\eta,1])}\,\frac{e^{\Ue(x)}}{\int_\Om e^{\Ue}}
=\frac{\chi_{[1-\eta,1]}(\al)}{\calP([1-\eta,1])}\,\psie(x).
\]
Clearly, $\int_{\tO}\rhoe=\la$ for all $\eps>0$.
\par
We claim that
\begin{equation}
\label{eq:rhoe1}
\int_{\tO}\al\rhoe\,\tG\ast(\al\rhoe)\,\dtx
=\left(\frac{\int_{[1-\eta,1]}\al\,\calP(d\al)}{\calP([1-\eta,1])}\right)^2\int_\Om\psie\,G\ast\psie.
\end{equation}
Indeed, we have:
\[
\begin{aligned}
\int_{\tO}\al\rhoe\,&\tG\ast(\al\rhoe)\,\dtx
=\int_{[1-\eta,1]}\frac{\al\calP(d\al)}{\calP([1-\eta,1])}\int_\Om\psie(x)\,dx
\int_{\tO}G(x,y)\beta\rhoe(y)\,\calP(d\beta)dy\\
=&\int_{[1-\eta,1]}\frac{\al\calP(d\al)}{\calP([1-\eta,1])}\int_\Om\psie(x)\,dx
\int_{[1-\eta,1]}\frac{\beta\calP(d\beta)}{\calP([1-\eta,1])}\int_\Om G(x,y)\psie(y)\,dy\\
=&\left(\frac{\int_{[1-\eta,1]}\al\,\calP(d\al)}{\calP([1-\eta,1])}\right)^2\int_\Om\psie\,G\ast\psie.
\end{aligned}
\]
We claim that
\begin{equation}
\label{eq:rhoe2}
\int_{\tO}\rhoe(\tx)\log\rhoe(\tx)\,\dtx=\int_\Om\psie(x)\log\psie(x)\,dx.
\end{equation}
Indeed, we have:
\[
\begin{aligned}
\int_{\tO}\rhoe(\tx)\log\rhoe(\tx)\,\dtx
=&\int_{[1-\eta,1]}\frac{\calP(d\al)}{\calP([1-\eta,1])}\int_\Om\psie\log(\chi_{[1-\eta,1]}(\al)\psie(x))\,dx\\
=&\int_{[1-\eta,1]}\frac{\calP(d\al)}{\calP([1-\eta,1])}\int_\Om\psie\log\psie(x)\,dx\\
=&\int_\Om\psie(x)\log\psie(x)\,dx.
\end{aligned}
\]
In view of \eqref{eq:rhoe1}--\eqref{eq:rhoe2} we may write
\[
\begin{aligned}
\calF(\rhoe)=\int_\Om\psie\log\psie
-\frac{1}{2}\left(\frac{\int_{[1-\eta,1]}\al\,\calP(d\al)}{\calP([1-\eta,1])}\right)^2\int_\Om\psie\,G\ast\psie-\la.
\end{aligned}
\]
In view of Lemma~\ref{lem:psieexp}, we deduce the expansion
\[
\calF(\rhoe)=\la\left\{1-\left(\frac{\int_{[1-\eta,1]}\al\,\calP(d\al)}{\calP([1-\eta,1])}\right)^2
\frac{\la}{8\pi+o(1)}\right\}\log\frac{1}{\eps^2}+O(1),
\]
as $\eps\to0^+$.
Since $\la>8\pi$, by taking $0<\eta\ll1,$ we may assume that
\[
\la>\left(\frac{\calP([1-\eta,1])}{\int_{[1-\eta,1]}\al\,\calP(d\al)}\right)^2\,8\pi.
\]
It follows that for some suitably small $\eps_0>0$ we have
\[
1-\left(\frac{\int_{[1-\eta,1]}\al\,\calP(d\al)}{\calP([1-\eta,1])}\right)^2
\frac{\la}{8\pi+o(1)}<0
\]
for all $0<\eps<\eps_0$, and the desired asymptotic behavior~\eqref{eq:Finfty} follows.
\par
The proof of Theorem~\ref{thm:Beckner} is now complete.
\end{proof}
\section{Appendix: Proof of Lemma~\ref{lem:psieexp}}
\label{sec:appA}
We recall from Section~\ref{sec:Beckner} that
\[
\psie=\la\frac{e^{\Ue}}{\int_\Om e^{\Ue}},
\]
where $\Ue$ is the Liouville bubble defined in \eqref{def:bubble}.
In what follows we define:
\begin{equation}
\label{def:Omeps}
\Om_\eps:=\{y\in\rr^2:\ \eps y\in\Om\}.
\end{equation}
We compute:
\begin{equation}
\label{eq:psilogpsi}
\int_\Om\psie\log\psie=
\int_\Om\frac{\la}{\int_\Om e^{\Ue}}e^{\Ue}\log\left(\frac{\la}{\int_\Om e^{\Ue}}e^{\Ue}\right)
=\frac{\la}{\int_\Om e^{\Ue}}\int_\Om e^{\Ue}\Ue
+\la\log\left(\frac{\la}{\int_\Om e^{\Ue}}\right).
\end{equation}
Moreover,
\begin{equation}
\label{eq:psiGpsi}
\int_\Om\psie\,G\ast\psie
=\left(\frac{\la}{\int_\Om e^{\Ue}}\right)^2\int_\Om e^{\Ue}\,G\ast e^{\Ue}.
\end{equation}
\begin{lemma}
\label{lem:Ueint}
The following expansion holds, as $\eps\to0^+$:
\[
\int_{\Om}e^{\Ue}=8\pi+o(1).
\]
\end{lemma}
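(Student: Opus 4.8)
The plan is to exploit the two facts already recorded about the Liouville bubble: the explicit formula $e^{\Ue(x)}=8\eps^2/(\eps^2+|x|^2)^2$ coming from \eqref{def:bubble}, and the normalization $\int_{\rr^2}e^{\Ue}=8\pi$ for every $\eps>0$. Writing
\[
\int_{\Om}e^{\Ue}=8\pi-\int_{\rr^2\setminus\Om}e^{\Ue},
\]
the claim reduces to showing that the tail $\int_{\rr^2\setminus\Om}e^{\Ue}$ vanishes as $\eps\to0^+$.

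Since $0\in\Om$ and $\Om$ is open, fix $R>0$ with $B_R(0)\subset\Om$, so that $\rr^2\setminus\Om\subset\{|x|\ge R\}$. Then I would estimate, passing to polar coordinates,
\[
0\le\int_{\rr^2\setminus\Om}e^{\Ue}
\le\int_{|x|\ge R}\frac{8\eps^2}{(\eps^2+|x|^2)^2}\,dx
=16\pi\eps^2\int_R^\infty\frac{r\,dr}{(\eps^2+r^2)^2}
=\frac{8\pi\eps^2}{\eps^2+R^2},
\]
which is $O(\eps^2)=o(1)$ as $\eps\to0^+$. (Equivalently, one may perform the change of variables $x=\eps y$, obtaining $\int_\Om e^{\Ue}=\int_{\Om_\eps}8(1+|y|^2)^{-2}\,dy$ with $\Om_\eps$ as in \eqref{def:Omeps}; since $0\in\Om$, the sets $\Om_\eps$ increase to $\rr^2$ and monotone convergence gives the limit $\int_{\rr^2}8(1+|y|^2)^{-2}\,dy=8\pi$.)

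There is no real obstacle here: the statement is a routine tail bound for the concentrating family $e^{\Ue}$, and the only mild point to record is the use of $0\in\Om$ together with openness of $\Om$ to produce the ball $B_R(0)$. Combining the two displays yields $\int_\Om e^{\Ue}=8\pi+O(\eps^2)=8\pi+o(1)$, as asserted.
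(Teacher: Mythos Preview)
Your proof is correct and essentially the same as the paper's: both rely on the explicit formula $e^{\Ue}=8\eps^2/(\eps^2+|x|^2)^2$ and reduce to an elementary radial computation. The paper performs the change of variables $x=\eps y$ and then sandwiches $\Om_\eps$ between two balls $B_{r_1/\eps}\subset\Om_\eps\subset B_{r_2/\eps}$, computing the integral over each explicitly; your main argument instead subtracts the tail over $\rr^2\setminus\Om$ from the known value $8\pi$, using only an inner ball $B_R\subset\Om$, and your parenthetical alternative is precisely the paper's substitution together with monotone convergence in place of the two-sided sandwich. Either way the content is the same routine estimate, and you even obtain the slightly sharper remainder $O(\eps^2)$.
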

\begin{proof}
We have, recalling \eqref{def:Omeps}:
\[
\begin{aligned}
\int_\Om e^{\Ue}=\int_\Om\frac{8\eps^2}{(\eps^2+|x|^2)^2}\,dx
=8\int_{\Om_\eps}\frac{dy}{(1+|y|^2)^2}.
\end{aligned}
\]
Let $0<r_1<r_2$ be such that $B_{r_1}\subset\Om\subset B_{r_2}$.
We have, for $j=1,2$:
\[
\int_{B_{r_j/\eps}}\frac{dy}{(1+|y|^2)^2}=\pi\left(1-\frac{1}{1+(\frac{r_j}{\eps})^2}\right)
\]
so that
\[
8\pi\left(1-\frac{1}{1+(\frac{r_1}{\eps})^2}\right)\le\int_\Om e^{\Ue}\le8\pi\left(1-\frac{1}{1+(\frac{r_2}{\eps})^2}\right)
\]
and the claim follows.
\end{proof}
\begin{lemma}
\label{lem:eUeUe}
The following expansion holds, as $\eps\to0^+$:
\[
\int_\Om e^{\Ue}\Ue
=\log\left(\frac{1}{\eps^2}\right)\int_\Om e^{\Ue}+O(1),
\]
uniformly for $\eps\to0^+$.
\end{lemma}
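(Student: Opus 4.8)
The plan is to exploit the explicit scaling structure of the Liouville bubble via the substitution $x=\eps y$. Since $\eps^2+|x|^2=\eps^2(1+|y|^2)$, one has
\[
\Ue(\eps y)=\log\frac{8}{\eps^2(1+|y|^2)^2}=U(y)-\log\eps^2,
\qquad
U(y):=\log\frac{8}{(1+|y|^2)^2},
\]
and, because the Jacobian $\eps^2$ exactly cancels the factor $\eps^{-2}$ appearing in $e^{\Ue(\eps y)}$, the same change of variables also yields $e^{\Ue(x)}\,dx=e^{U(y)}\,dy$. Recalling the notation $\Om_\eps=\{y\in\rr^2:\ \eps y\in\Om\}$ from \eqref{def:Omeps}, this already gives $\int_\Om e^{\Ue}\,dx=\int_{\Om_\eps}e^{U}\,dy$, in agreement with Lemma~\ref{lem:Ueint}.

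Next I would insert these two identities into the integral under consideration:
\[
\int_\Om e^{\Ue}\Ue\,dx
=\int_{\Om_\eps}e^{U(y)}\bigl(U(y)-\log\eps^2\bigr)\,dy
=\log\frac{1}{\eps^2}\int_{\Om_\eps}e^{U}\,dy+\int_{\Om_\eps}e^{U}U\,dy
=\log\frac{1}{\eps^2}\int_\Om e^{\Ue}\,dx+\int_{\Om_\eps}e^{U}U\,dy.
\]
It therefore remains only to show that the last term is $O(1)$, uniformly as $\eps\to0^+$.

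For this I would bound $\bigl|\int_{\Om_\eps}e^{U}U\,dy\bigr|\le\int_{\rr^2}e^{U}|U|\,dy$ using $\Om_\eps\subset\rr^2$, and note that the right-hand side is a finite constant independent of $\eps$. Indeed $e^{U(y)}|U(y)|=\frac{8}{(1+|y|^2)^2}\,\bigl|\log 8-2\log(1+|y|^2)\bigr|$, which in polar coordinates decays like $r^{-3}\log r$ as $r\to\infty$ and is bounded near the origin, hence is integrable on all of $\rr^2$. This supplies the desired uniform $O(1)$ bound and finishes the proof.

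The argument is entirely routine; the only points requiring a moment's care are the bookkeeping in the change of variables (the cancellation of the powers of $\eps$ between the Jacobian and $e^{\Ue}$) and the elementary check that $e^{U}|U|$ is integrable at infinity, which is precisely what makes the remainder bound uniform in $\eps$.
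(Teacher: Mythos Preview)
Your proof is correct and follows essentially the same route as the paper: both arguments use the change of variables $x=\eps y$ to extract the factor $\log(1/\eps^2)$ and reduce the remainder to an integral over $\Om_\eps$ of a fixed function that is integrable on all of $\rr^2$. The only difference is cosmetic---the paper first splits $\Ue=\log(8\eps^2)+\log\frac{1}{(\eps^2+|x|^2)^2}$ and then rescales the second piece, whereas you rescale immediately via $\Ue(\eps y)=U(y)-\log\eps^2$; the resulting remainder terms differ by the bounded quantity $\log 8\int_{\Om_\eps}e^{U}$.
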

\begin{proof}
We have:
\[
\int_\Om e^{\Ue}\Ue=\int_\Om e^{\Ue}\log\frac{8\eps^2}{(\eps^2+|x|^2)^2}
=\int_\Om e^{\Ue}\log\frac{1}{(\eps^2+|x|^2)^2}+\log(8\eps^2)\int_\Om e^{\Ue}.
\]
We simplify the first term:
\[
\begin{aligned}
\int_\Om e^{\Ue}\log\frac{1}{(\eps^2+|x|^2)^2}\,dx=&
\int_\Om e^{\Ue}\log\frac{1}{\eps^4(1+|\frac{x}{\eps}|^2)^2}\,dx\\
\stackrel{y=x/\eps}{=}&\log\frac{1}{\eps^4}\int_\Om e^{\Ue}+\int_{\Om/\eps}\frac{8}{(1+|y|^2)^2}\log\frac{1}{(1+|y|^2)^2}\,dy.
\end{aligned}
\]
The asserted expansion follows.
\end{proof}
We note that in view of \eqref{eq:Liouville} we may write
\begin{equation*}
G\ast e^{\Ue}=P\Ue,
\end{equation*}
where $P$ denotes the projection operator onto $H_0^1(\Om)$.
We recall that
\begin{equation}
\label{eq:projexp}
P\Ue=\Ue-\log(8\eps^2)+8\pi H(x,0)+O(\eps^2),
\end{equation}
where $H(x,y)$ is te Robin's function defined by
\[
G(x,y)=\frac{1}{2\pi}\log\frac{1}{|x-y|}+H(x,y),
\]
see, e.g., \cite{EGP}.
\begin{lemma}
\label{lem:eUeGeUe}
The following expansion holds:
\[
\int_\Om e^{\Ue}\,G\ast e^{\Ue}=\log\frac{1}{\eps^4}\int_\Om e^{\Ue}+O(1).
\]
\end{lemma}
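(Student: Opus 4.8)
The plan is to use the representation $G\ast e^{\Ue}=P\Ue$ together with the expansion \eqref{eq:projexp} of the projection, and then reduce everything to the already-established Lemmas~\ref{lem:Ueint} and \ref{lem:eUeUe}. No genuinely new estimate is needed; the lemma is essentially bookkeeping on top of these ingredients.

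First I would substitute \eqref{eq:projexp} into the integral to obtain
\[
\int_\Om e^{\Ue}\,G\ast e^{\Ue}
=\int_\Om e^{\Ue}\,P\Ue
=\int_\Om e^{\Ue}\Ue-\log(8\eps^2)\int_\Om e^{\Ue}+8\pi\int_\Om e^{\Ue}H(x,0)+O(\eps^2)\int_\Om e^{\Ue}.
\]
By Lemma~\ref{lem:Ueint}, $\int_\Om e^{\Ue}=8\pi+o(1)$ is bounded (indeed $\int_\Om e^{\Ue}\le\int_{\rr^2}e^{\Ue}=8\pi$), so the term $\log(8)\int_\Om e^{\Ue}$ hidden in $\log(8\eps^2)\int_\Om e^{\Ue}=(\log 8)\int_\Om e^{\Ue}+(\log\eps^2)\int_\Om e^{\Ue}$ is $O(1)$, and it remains to handle the two main contributions.

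For the leading terms I would invoke Lemma~\ref{lem:eUeUe}, which gives $\int_\Om e^{\Ue}\Ue=\log(1/\eps^2)\int_\Om e^{\Ue}+O(1)$; combining with the $\log\eps^2$ piece above yields
\[
\int_\Om e^{\Ue}\Ue-\log(8\eps^2)\int_\Om e^{\Ue}
=\Big(\log\tfrac{1}{\eps^2}-\log\eps^2\Big)\int_\Om e^{\Ue}+O(1)
=\log\tfrac{1}{\eps^4}\int_\Om e^{\Ue}+O(1),
\]
since $\log(1/\eps^2)-\log\eps^2=\log(1/\eps^4)$. For the two remaining terms: because $0\in\Om$ is an interior point, $x\mapsto H(x,0)$ is continuous on $\overline\Om$, hence bounded; together with the boundedness of $\int_\Om e^{\Ue}$ this gives $8\pi\int_\Om e^{\Ue}H(x,0)=O(1)$, and likewise $O(\eps^2)\int_\Om e^{\Ue}=O(1)$. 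Collecting all contributions yields the claimed expansion.

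The only point requiring a word of care is that the error term in \eqref{eq:projexp} is uniform in $x\in\Om$ (as in \cite{EGP}), so that integrating it against $e^{\Ue}$ indeed produces an $O(\eps^2)$, hence $O(1)$, quantity; once this uniformity is granted there is no real obstacle, and the proof is a short computation.
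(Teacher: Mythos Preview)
Your proof is correct and follows essentially the same route as the paper: substitute the projection expansion \eqref{eq:projexp} into $\int_\Om e^{\Ue}P\Ue$, use Lemma~\ref{lem:eUeUe} for the $\int_\Om e^{\Ue}\Ue$ contribution, and absorb the remaining pieces into $O(1)$ via the boundedness of $\int_\Om e^{\Ue}$. The only difference is cosmetic: the paper immediately lumps $8\pi H(x,0)+O(\eps^2)$ into a single $O(1)$, whereas you treat these two terms separately and justify the boundedness of $H(\cdot,0)$ explicitly.
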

\begin{proof}
Using \eqref{eq:projexp} we compute:
\[
\begin{aligned}
\int_\Om e^{\Ue}\,G\ast e^{\Ue}
=&\int_\Om e^{\Ue}P\Ue
=\int_\Om e^{\Ue}(\Ue-\log(8\eps^2)+O(1))\\
=&\log\left(\frac{1}{\eps^2}\right)\int_\Om e^{\Ue}-\log\eps^2\int_\Om e^{\Ue}+O(1).
\end{aligned}
\]
The claim follows.
\end{proof}
\begin{proof}[Proof of Lemma~\ref{lem:psieexp}]
Proof of (i).
In view of \eqref{eq:psilogpsi}, Lemma~\ref{lem:Ueint} and Lemma~\ref{lem:eUeUe}, we readily derive the desired expansion.
\par
Proof of (ii). In view of \eqref{eq:psiGpsi}, Lemma~\ref{lem:Ueint} and Lemma~\ref{lem:eUeGeUe}, we readily derive the desired expansion.
\end{proof}
\section{Concluding remarks: comparison of two mean field equations}
\label{sec:appB}
We have rigorously established in Theorem~\ref{thm:var} that the functionals
\begin{equation*}
\begin{aligned}
\calL(\rho,v)=&\int_{\tO}\rho(\log\rho-1)\,\dtx+\frac{1}{2}\int_\Om|\nabla v|^2\,dx
-\int_{\tO}\al\rho v\,\dtx,\\
\calJ(v)=&\frac{1}{2}\int_\Om|\nabla v|^2\,dx-\la\log\left(\int_{\tO}e^{\al v}\,\dtx\right)+\la(\log\la-1),
\end{aligned}
\end{equation*}
where $\rho=\oplus\rhoa\in L\log L(\tO)$, $v\in H_0^1(\Om)$,
are related by the minimization property
\[
\calJ(v)=\min_{\widetilde\Gamma_\la}\calL(\cdot,v)
\qquad\hbox{for all\ }v\in H_0^1(\Om),
\]
where
\[
\begin{aligned}
\widetilde\Gamma_\la:=&\left\{\rho\in L\log L(\tO):\
\rho\ge0\quad \hbox{a.e.\ },\; \int_{\tO}\rho\,\dtx=\la\right\}.
\end{aligned}
\]
Moreover, Theorem~\ref{thm:var}--(iv) and Theorem~\ref{thm:Beckner} imply that the optimal value of $\la>0$
which ensures boundedness from below of $\calJ$ on $H_0^1(\Om)$ is given by
\begin{equation}
\label{eq:MTstoch}
\bar\la=8\pi.
\end{equation}
In view of the corresponding results for the case $\calP(d\al)=\de_1(d\al)$,
the value $\bar\la$ is expected to provide the critical total mass for the occurrence of chemotactic collapse
vs.\ the existence of global solutions for \eqref{eq:detsys}, as well
for the evolution problem
\begin{equation*}
\left\{
\begin{aligned}
&\frac{\pl v}{\pl t}=\Delta v+\la\int_{[-1,1]}\frac{\al e^{\al v}}{\int_{\tO}e^{\beta v}\,\dtx}\,\calP(d\al),
&&\mbox{in\ }\Om\times(0,T)\\
&v=0,
&&\mbox{on\ }\partial\Om\times(0,T)\\
&v(x,0)=v^0(x),
&&\mbox{in\ }\Omega.
\end{aligned}
\right.
\end{equation*}
See \cite{DolbeaultPerthame2004, KavallarisSuzuki, JaegerLuckhaus, GajewskiZacharias1998} and the references therein.
The critical value $\bar\la$ also plays a central role in establishing the existence of the corresponding steady states,
i.e., of solutions for the non-local semilinear elliptic problem
\begin{equation}
\label{eq:steadyneri}
\left\{
\begin{aligned}
-\Delta v=&\la\int_{[-1,1]}\frac{\al e^{\al v}}{\int_{\tO}e^{\beta v}\,\dtx}\,\calP(d\al),
&&\hbox{in\ }\Om\\
v=&0,&&\hbox{on\ }\pl\Om.
\end{aligned}
\right.
\end{equation}
See \cite{RiZe2016, DeMarchisRicciardi, RicciardiTakahashi, PistoiaRicciardi, CSLinreview}.
\par
It is interesting to compare the properties mentioned above with the corresponding results
recently obtained in \cite{RicciardiSuzuki}
for the \textit{same} Lyapunov functional~$\calL$ under a \textit{different} constraint for the conserved population mass.
Such conditions were originally motivated by the deterministic model for stationary turbulent flows with variable intensity
derived in \cite{SawadaSuzuki} along the approach introduced by Onsager, see \cite{Suzuki2008book} and the references therein.
\par
More precisely, for $\la>0$ we define the functional
\[
\calI_\la(v):=\frac{1}{2}\int_\Om|\nabla v|^2\,dx-\la\int_{[-1,1]}\log\left(\int_\Om e^{\al v}\,dx\right)\,\calP(d\al)+\la(\log\la-1).
\]
We recall from Section~\ref{sec:Beckner} that the set $\Gamma_\la$ is defined by
\[
\Gamma_\la:=\left\{\psi\in L\log L(\Om):\ \psi\ge0\ \hbox{a.e.\ in\ }\Om,\ \int_\Om\psi\,dx=\la\right\}
\]
and we define correspondingly
\[
\widetilde{\widetilde\Gamma}_\la:=\oplus_{\al\in[-1,1]}\Gamma_\la:=\left\{\oplus\rhoa:\ \rhoa\in\Gamma_\la\ \mbox{for all\ }\al\in[-1,1]\right\}.
\]
In words, $\widetilde{\widetilde\Gamma}_\la$ is the admissible set of population densities $\rhoa$, $\al\in I$,
\textit{all} of which have total mass $\la$, i.e., $\int_\Om\rhoa=\la$ for all $\al\in I$.
\par
The following duality property was rigorously established in \cite{RicciardiSuzuki}
in the same spirit as Theorem~\ref{thm:var}--(iv):
\[
\inf_{\widetilde{\widetilde\Gamma}_\la\times H_0^1(\Om)}\calL=\inf_{\widetilde{\widetilde\Gamma}_\la}\calF=\inf_{H_0^1(\Om)}\calI_\la.
\]
Moreover,
\[
\calI_\la(v)=\min_{\widetilde{\widetilde\Gamma}_\la}\calL(\cdot,v)
\ \hbox{for all\ }v\in H_0^1(\Om).
\]
This duality property, together with the logarithmic Hardy-Littlewood-Sobolev inequality established in \cite{ShafrirWolansky}, was
used to compute the optimal value of $\la$ which ensures boundedness from below of the functional $\mathcal I_\la$,
which is given by
\[
\bar{\bar\la}=\inf\left\{
\frac{8\pi\calP(K_\pm)}{[\int_{K_\pm}\al\,\calP(d\al)]^2}:\ K_\pm\subset I_\pm\cap\supp\calP
\right\},
\]
where we denote $I_+:=[0,1]$, $I_-:=[-1,0)$,
and where $K_\pm$ denotes a Borel subset of $I_\pm$.
In particular, $\bar{\bar\la}$ significantly depends on $\calP$.
The value $\bar{\bar\la}$ is expected to provide the critical mass for chemotactic collapse
vs.\ global existence of solutions for the evolution problem
\begin{equation}
\label{eq:detMFevol}
\left\{
\begin{aligned}
&\frac{\pl v}{\pl t}=\Delta v+\la\int_I\frac{\al e^{\al v}}{\int_{\Om}e^{\al v}\,dx}\,\calP(d\al)
&&\hbox{in\ }\Om\times(0,T)\\
&v(x,t)=0&&\hbox{on }\pl\Om\times(0,T)\\
&v(x,0)=v^0(x),&&\hbox{in\ }\Om,
\end{aligned}
\right.
\end{equation}
We note that \eqref{eq:detMFevol} is obtained from \eqref{eq:vintro} by assuming the
\lq\lq individual population mass conservation" constraint:
\begin{equation}
\label{eq:indmass}
\int_\Om\rhoa(x,t)\,dx=\la\qquad\hbox{for all }\al\in[-1,1].
\end{equation}
Condition~\eqref{eq:indmass} is natural when the population species do not evolve from one kind into another.
The value $\bar{\bar\la}$ also yields the first blow-up level for the corresponding steady state problem
\begin{equation}
\label{eq:detMFss}
\left\{
\begin{aligned}
-\Delta v=&\la\int_I\frac{\al e^{\al v}}{\int_{\Om}e^{\al v}\,dx}\,\calP(d\al),
&&\hbox{in\ }\Om\\
v=&0,&&\hbox{on }\pl\Om.\\
\end{aligned}
\right.
\end{equation}
Results for solutions to the stationary problem~\eqref{eq:detMFss} have been obtained in
\cite{ORS, JevnikarYang}. In particular, the special case $\calP(d\al)=(\de_1(d\al)+\de_{1/2}(d\al))/2$
was studied in \cite{JevnikarYang} in relation to the Tzitz\'eica equation in differential geometry.
\par
In short, the steady state analysis for the problems \eqref{eq:steadyneri} and \eqref{eq:detMFss}
shows that,
despite of their formal similarity and the fact that they are motivated by the same statistical mechanics problem,
the corresponding solution sets exhibit significantly different mathematical properties.
\par
By introducing the new multi-species chemotaxis system \eqref{eq:detsys},
we have shown that the stationary problems \eqref{eq:steadyneri} and \eqref{eq:detMFss}
may be \emph{both} viewed as steady states for the chemotaxis system \eqref{eq:detsys} in the fast population dynamics limit,
by imposing \emph{different} conserved population mass constraints given by \eqref{eq:averagemassintro} and \eqref{eq:indmass},
respectively; the former being natural in the situation where the populations $\rhoa$ are are produced by a cell differentiation
process, the latter in the situation where evolution from one species into another does not occur.
\section*{Acknlowledgements}
This research is partially supported by PRIN 2012 74FYK7\_005 and GNAMPA-INDAM 2015
\lq\lq Alcuni aspetti di equazioni ellittiche non-lineari".
The first author acknowledges warm hospitality at the Department of Mathematics and Applications
of Naples
Federico II University, where part of this work was carried out.


\begin{thebibliography}{99}
\bibitem{BebernesLacey}
Bebernes, J.W., Lacey, A.A.,
Global existence and finite-time blow-up for a class of nonlocal parabolic problems,
Adv.\ Differential Equations \textbf{6} no.~2 (1997), 927--953.
\bibitem{BebernesTalaga}
Bebernes, J.W., Talaga, P.,
Nonlocal Problems Modelling Shear Banding,
Commun.\ Appl.\ Nonlinear Anal.\ \textbf{3} (1996) no.~2, 79--103.
\bibitem{Beckner}
Beckner, W., Sharp Sobolev inequalities on the sphere and the Moser-Trudinger inequality,
Ann.\ of Math.\ \textbf{138} no.~2 (1993), 213--242.
\bibitem{Biler1992}
Biler, P.,
Existence and asymptotics of solutions for a parabolic-elliptic system with nonlinear no-flux
boundary conditions,
Nonlinear Analysis, Theory, Methods and Applications, \textbf{19} No.~12 (1992),
1121--1136.
\bibitem{CLMP}
Caglioti, E., Lions, P.L., Marchioro, C., Pulvirenti, M.,
A special class of stationary flows for two-dimensional Euler equations:
A statistical mechanics description,
Commun.\ Math.\ Phys.\ \textbf{174} (1995), 229--260.
\bibitem{Chavanis}
Chavanis, P.H.,
Statistical mechanics of two-dimensional vortices and stellar systems.
In: \textit{Dynamics and Thermodynamics of Systems with Long-Range Interactions,}
Lecture Notes in Phys.\ 602, Springer, 208--289 (2002).
\bibitem{DeMarchisRicciardi}
De Marchis, F., Ricciardi, T.,
Existence of stationary turbulent flows with variable positive vortex intensity,
arXiv:1607.07051.
\bibitem{DolbeaultPerthame2004}
Dolbeault, J., Perthame, B.,
Optimal critical mass in the two dimensional Keller-Segel model in $\rr^2$,
C.\ R.\ Math.\ Acad.\ Sci.\ Paris \textbf{339} (2004), 611--616.
\bibitem{ESV2009}
Espejo Arenas, E., Stevens, A., Vel\'azquez, J.,
Simultaneous finite time blow-up in  two-species model for chemotaxis,
Analysis \textbf{29} (2009), 317--338.
\bibitem{EGP}
Esposito, P., Grossi, M., Pistoia, A.
On the existence of blowing-up solutions for a mean field equation.
Ann.\ Inst.\ H.~Poincar\'e Anal.\ Non Lin\'eaire  {\bf  22} (2005), 227--257.
\bibitem{Gajewski1985}
Gajewski, H.,
On Existence, Uniqueness and Asymptotic Behavior of Solutions of the Basic Equations
for Carrier Transport in Semiconductors,
Z.\ Angew.\ Math.\ u.\ Mech.\ \textbf{65} (1985) 2, 101--108.
\bibitem{GajewskiZacharias1998}
Gajewski, H., Zacharias, K.,
Global behavior of a Reaction-Diffusion System Modelling Chemotaxis,
Math.\ Nach.\ \textbf{195} (1998), 77--114.
\bibitem{GuiJevnikarMoradifam}
Gui, C., Jevnikar, A., Moradifam, A.,
Symmetry and uniqueness of solutions to some Liouville-type problems:
asymmetric sinh-Gordon equation, cosmic string equation and Toda system,
preprint.
\bibitem{Ho2011}
Horstmann, D.,
Generalizing the Keller-Segel Model: Lyapunov Functionals, Steady State Analysis,
and Blow-Up Results for Multi-species Chemotaxis Models in the Presence of Attraction and Repulsion
Between Competitive Interacting Species,
J.\ Nonlinear Sci.\ \textbf{21} (2011), 231--270.
\bibitem{JaegerLuckhaus}
J\"ager, W., Luckhaus, S.,
On explotions of solutions of partial differential equations modelling chemotaxis,
Trans.\ Amer.\ Math.\ Soc.\ \textbf{329} no.~2 (1992), 819--824.
\bibitem{JevnikarYang}
Jevnikar, A., Yang, W.,
Analytic aspects of the Tzitz\'eica equation: blow-up analysis and existence results,
preprint.
\bibitem{KLT2004}
Kavallaris, N.I., Lacey, A.A., Tzanetis, D.E.,
Global existence and divergence of critical solutions of a non-local parabolic problem in Ohmic heating process,
Nonlinear Analysis, Theory, Methods and Applications \textbf{58} (2004), 787--812.
\bibitem{KavallarisSouplet}
Kavallaris, N.I., Souplet, Ph.,
Grow-up rate and refined asymptotics for a two-dimensional Patlak-Keller-Segel model in a disk,
SIAM J.\ Math.\ Anal.\ \textbf{40} no.~5, 1852--1881.
\bibitem{KavallarisSuzuki}
Kavallaris, N.I., Suzuki, T.,
On the finite-time blow-up of a non-local parabolic equation describing chemotaxis,
Differential Integral Equations \textbf{20} (2007), no.~3, 293--308.
\bibitem{KellerSegel}
Keller, E.F., Segel, L.A.,
Initiation of slime mold aggregation viewed as an instabilitity,
J.\ Theor.\ Biol.\ \textbf{26} (1970), 399--415.
\bibitem{Lacey1995}
Lacey, A.A.,
Thermal runaway in a non--local problem
modelling Ohmic heating. Part I: Model derivation and some special
cases,
Euro.\ J.\ Appl.\ Math.\ \textbf{6} (1995), 127--144.
\bibitem{Lacey1995II}
Lacey, A.A.,
Thermal runaway in a non--local problem
modelling Ohmic heating. Part II: General proof of blow-up and asymptotics of runaway,
Euro.\ J.\ Appl.\ Math.\ \textbf{6} (1995), 201--224.
\bibitem{CSLinreview}
Lin, C.S.,
An expository survey on the recent development of mean field equations,
Discrete Contin.\ Dynam.\ Systems \textbf{19} (2007), 387--410.
\bibitem{Moser}
Moser, J.,
A sharp form of an inequality by N.~Trudinger,
Indiana Math.\ J.\ \textbf{20} (1971), 1077--1091.
\bibitem{Neri}
Neri, C.,
Statistical Mechanics of the $N$-point vortex system with random intesities on a bounded domain,
 Ann.\ I.\ H.\ Poincar\'e - AN {\bf  21} (2004), 381--399.
\bibitem{ORS}
Ohtsuka, H., Ricciardi, T., Suzuki, T.,
Blow-up analysis for an elliptic equation describing stationary vortex flows with variable intensities
in 2D turbulence,
J.\ Differential Equations \textbf{249} no.~6 (2010), 1436--1465.
\bibitem{Onsager}
Onsager, L.,
Statistical hydrodynamics,
Nuovo Cimento Suppl.~6 (1949), 279--287.
\bibitem{PistoiaRicciardi}
Pistoia, A., Ricciardi, T.,
Concentrating solutions for a Liouville type equation with variable intensities in 2D turbulence,
Nonlinearity \textbf{29} (2016), 271--297.
\bibitem{QS07}
Quittner, P., Souplet, Ph.,
\textit{Superlinear parabolic problems. Blow-up, global existence and steady states,}
Birkh\"auser Advanced Texts,
Birkh\"auser Verlag, Basel, 2007.
\bibitem{Patlak1953}
Patlak, C.S.,
Random walk with persistence and external bias,
Bull.\ Math.\ Biol.\ Biophys.\ \textbf{15} (1953), 311--338.
\bibitem{RaoRen}
Rao, M.M., Ren, Z.D.,
\textit{Theory of Orlicz Spaces,}
Marcel Dekker, New York, 1991.
\bibitem{RicciardiSuzuki}
Ricciardi, T., Suzuki, T.,
Duality and best constant for a Trudinger-Moser inequality involving probability measures,
J.\ of Eur.\ Math.\ Soc.\ (JEMS) \textbf{16} (2014), 1327--1348.
\bibitem{RicciardiTakahashi}
Ricciardi, T., Takahashi, R.,
Blow-up behavior for a degenerate elliptic sinh-Poisson equation with variable intensities,
Calc.\ Var.\ Partial Differential Equations  \textbf{55} (2016),  no.~6, Art.~152, 25 pp.
\bibitem{RiZe2012}
Ricciardi, T., Zecca, G.,
Blow-up analysis for some mean field equations  involving probability measures from statistical hydrodynamics,
Differential Integral Equations \textbf{25} no.~3/4 (2012), 201--222.
\bibitem{RiZe2016}
Ricciardi, T., Zecca, G.,
Mass quantization and minimax solutions for Neri's mean field equation in $2D$-turbulence,
J.\ Differential Equations \textbf{260}, 339--369.
\bibitem{SawadaSuzuki}
Sawada, K., Suzuki, T.,
Derivation of the equilibrium mean field equations of point vortex and vortex filament system,
Theoret.\ Appl.\ Mech.\ Japan 56 (2008), 285--290.
\bibitem{ShafrirWolansky}
Shafrir, I., Wolansky, G.,
The logarithmic HLS inequality for systems on compact manifolds,
J.\ Funct.\ Anal.\ \textbf{227} (2005), 200--226.
\bibitem{Suzuki2008book}
Suzuki, T.,
\textit{Mean Field Theories and Dual Variation-Mathematical Structures of the Mesoscopic Model,}
Second Edition,
Atlantis Press, Paris, 2015.
\bibitem{Trudinger}
Trudinger, N.S.,
On Imbeddings into Orlicz Spaces and Some Applications,
Journal of Mathematics and Mechanics \textbf{17} (1967), 473--483.
\bibitem{Weijer}
Weijer, C.,
Dictyostelium morphogenesis,
Curr.\ Opin.\ Genet.\ Dev.\ \textbf{14} (2004), 392--398.
\bibitem{Wo1997}
Wolansky, G.,
A Critical Parabolic Estimate and Application to Nonlocal Equations arising in Chemotaxis,
Applicable Analysis \textbf{66} (1997), 291--321.
\bibitem{Wo2002}
Wolansky, G.,
Multi-components chemotactic system in the absence of conflicts,
Euro.\ Jnl.\ of Applied Mathematics \textbf{13} (2002), 641--661.
\bibitem{Wo2014}
Wolansky, G.,
Chemotactic systems in the presence of conflicts: A new functional inequality,
J.\ Differential Equations \textbf{261} (2016), 5119--5143.
\end{thebibliography}
\end{document}